\newtheorem{theorem}{Theorem}[section]
\newtheorem{lemma}[theorem]{Lemma}
\newtheorem{corollary}[theorem]{Corollary}
\newtheorem{definition}[theorem]{Definition}
\newtheorem{fact}[theorem]{Fact}
\newtheorem*{theorem*}{Theorem}
\newcommand{\forceP}{\mathbb{P}}
\newcommand{\forceQ}{\mathbb{Q}}
\newcommand{\forceR}{\mathbb{R}}
\newcommand{\ZFC}{\mathsf{ZFC}}
\newcommand{\ZFP}{\mathsf{ZF}^-}
\newcommand{\CH}{\mathsf{CH}}
\newcommand{\PD}{\mathsf{PD}}
\def\undertilde#1{\mathord{\vtop{\ialign{##\crcr
$\hfil\displaystyle{#1}\hfil$\crcr\noalign{\kern1.5pt\nointerlineskip}
$\hfil\tilde{}\hfil$\crcr\noalign{\kern1.5pt}}}}}
\title{Forcing the ${\Pi^1_3}$-Reduction Property and a Failure of $\Pi^1_3$-Uniformization}
\author{ Stefan Hoffelner\footnote{ WWU M\"unster. Research funded by the Deutsche Forschungsgemeinschaft (DFG German Research Foundation) under Germanys Excellence Strategy EXC 2044 390685587, Mathematics M\"unster: Dynamics-Geometry-Structure. }}
\date{22.09.2022}
\begin{document}

\maketitle

\begin{abstract}
We generically construct a model in which the $\Pi^1_3$-reduction property is true and the $\Pi^1_3$-uniformization property is false, thus producing a model which separates these two principles for the first time.
\end{abstract}

\section{Introduction}
The reduction property was introduced by K. Kuratowski in 1936 and is one of the three regularity properties of subsets of the reals which were extensively studied by descriptive set theorists, along with the separation and the uniformization property. 
\begin{definition}
We say that the $\Pi^1_n$-reduction property holds in a universe $V$ if every pair $A_0,A_1$ of $\Pi^1_n$-subsets of the reals in $V$ can be reduced by a pair of $\Pi^1_n$-sets $D_0,D_1$, which means that $D_0 \subset A_0$, $D_1 \subset A_1$, $D_0 \cap D_1= \emptyset$ and $D_0 \cup D_1=A_0 \cup A_1$.
\end{definition}
The reduction property for $\Pi^1_n$ is implied by the stronger uniformization property for $\Pi^1_n$-sets.
Recall that for an $A \subset 2^{\omega} \times 2^{\omega}$, we say that $f$ is a uniformization (or a uniformizing function) of
$A$ if there is a function $f: 2^{\omega} \rightarrow 2^{\omega}$, 
$dom(f)= pr_1(A)$ (where $pr_1(A)$ is $A$'s projection on the first coordinate) and the graph of $f$ is a subset of $A$. In other words, $f$ chooses exactly one point of every non-empty section of $A$.

\begin{definition}
 We say that the $\Pi^1_n$-uniformization property is true, if 
every set $A \subset 2^{\omega} \times 2^{\omega}$, $A \in \Pi^1_n$ has a uniformizing function $f_A$ whose graph is $\Pi^1_n$.
\end{definition}

Classical work of M. Kondo, building on ideas of Novikov, shows that the $\Pi^1_1$-uniformization (and consequently the $\Sigma^1_2$-uniformization) property is true. This is as much as $\ZFC$ can prove about the uniformization and the reduction property.
In G\"odel's constructible universe $L$, the $\Sigma^1_n$-uniformization-property for $n \ge 3$ is true, as $L$ admits a good $\Sigma^1_2$-wellorder of its reals. On the other hand, due to Y. Moschovakis' celebrated result, the axiom of projective determinacy $\PD$ outright implies the $\Pi^1_{2n+1}$-uniformization property for every $n \in \omega$, indeed ${\Delta}^1_{2n}$-determinacy implies the stronger ${\Pi}^1_{2n+1}$-scale property for every $n \in \omega$ (see \cite{Moschovakis}). By the famous result of D. Martin and J. Steel, $n$-many Woodin cardinals and a measurable above imply $\Pi^1_{n+1}$-determinacy (see \cite{MS} ), in particular under the assumption of $\omega$-many Woodin cardinals $\PD$ becomes true which fully settles the behaviour of the uniformization and reduction property for projective pointclasses.

Despite the extensive list of deep results that has been produced in the last 60 years on this topic, there are still some basic and natural questions concerning the reduction or the uniformization property which remained open.
Note e.g. that in the scenarios above the reduction property holds because the uniformization property does. As these are the only known examples in which the reduction property holds, it is possible that reduction and uniformization for projective pointclasses are in fact equivalent principles over $\ZFC$. 
So the very natural question, which surely has been asked already much earlier, arises whether one can produce universes of set theory where the reduction property holds for some pointclass, yet the corresponding uniformization property fails.
The purpose of our article is to show that this can be done.

\begin{theorem*}
There is a generic extension of $L$ in which the $\Pi^1_3$-reduction property is true and the $\Pi^1_3$-uniformization property is false .
\end{theorem*}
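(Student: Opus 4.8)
The plan is to realise both properties in one forcing extension $L[G]$, where $\mathbb P$ is an iteration $\langle\mathbb P_\alpha:\alpha<\omega_1\rangle$ over $L$ whose two jobs are to \emph{hard-wire a single $\Pi^1_3$ reduction operator} into the model and, simultaneously, to \emph{diagonalise against every candidate $\Pi^1_3$ uniformising function of one fixed $\Pi^1_3$ relation}. Two standard reformulations guide the design. First, fixing a lightface universal $\Pi^1_3$ set $U\subseteq\omega\times 2^\omega$ and using that $\Pi^1_3$ is adequate, $\omega$-parametrised and closed under bounded number quantifiers, the $\Pi^1_3$-reduction property is equivalent to the existence of one $\Pi^1_3$ set whose sections, uniformly in $m,n$, reduce the pair $(U^m,U^n)$; so it suffices to force such a set. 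Second, to refute $\Pi^1_3$-uniformisation it suffices to produce one $\Pi^1_3$ set $A\subseteq 2^\omega\times 2^\omega$ all of whose sections are nonempty and which admits no $\Pi^1_3$ graph $\subseteq A$ with full first projection. Since the iteration will be small (of size $\aleph_1$ over $L$) and preserve cardinals, the extension satisfies $\CH$; this is what makes a diagonalisation of length $\omega_1$ against all real parameters feasible.

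\textbf{Ground model and coding machinery.} Starting from $L$, I first pass, by a preliminary $\GCH$- and $\omega_1$-preserving step, to a model carrying a $\Sigma^1_3$-definable \emph{spine} $\vec r=\langle r_\alpha:\alpha<\omega_1\rangle$ of reals, arranged so that its defining $\Sigma^1_3$ formula is of ``local verification'' type in the style of David and Jensen: each $r_\alpha$ codes an ordinal $\gamma_\alpha$, and the relevant initial part of $r_\alpha$ can be checked already inside $L_{\gamma_\alpha}[\vec r\restriction\alpha]$, so that no further small forcing can manufacture a spurious solution of the formula. The work engine is relativised almost-disjoint (Solovay/Jensen) coding. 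The point that pins everything precisely to the \emph{third} projective level is that $\Sigma^1_2$ sets are uniformly $\omega_1$-Suslin: the auxiliary statements ``$x\in B$'' that I need to code can be written in the form ``for every real $z$ almost disjoint from $x$ a certain $\Sigma^1_2$-condition holds of $z$ and $\vec r$'', i.e.\ $\Pi^1_3$ and no lower.

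\textbf{The iteration and the heart of the matter.} On top of this ground model I run $\langle\mathbb P_\alpha:\alpha<\omega_1\rangle$, driven by a bookkeeping function that interleaves three tasks: (i) append the next member of $\vec r$; (ii) given the next pair $(m,n)$, compute in the current model a reduction pair for $(U^m,U^n)$ and code it into the next block of the spine; (iii) given the next pair $(\varphi,p)$ consisting of a $\Pi^1_3$-formula and a real parameter, code into the spine a witness that $\varphi(\cdot,\cdot,p)$ fails to uniformise $A$ — it misses a section, is multivalued, or takes a value outside $A$. The crucial and, I expect, hardest part is \emph{correctness}: one must show that in $L[G]$ and in every intermediate model $\vec r$ remains the \emph{unique} solution of its defining $\Sigma^1_3$ formula (this is where condensation in $L$ and the local-verification design do their work, and where one checks that the coding forcings add no extraneous witnesses), and that every decision made in a task of type (ii) or (iii) is \emph{permanent}, i.e.\ cannot be undone by later stages. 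Permanence comes from ccc/properness of the tail forcings together with $\Sigma^1_2$-absoluteness and the robustness of the spine, which pin down the relevant $\Pi^1_3$ facts forever. Along the way one must verify that $\mathbb P$ preserves $\omega_1$ and that $(2^\omega)^{L[G]}=\bigcup_{\alpha<\omega_1}(2^\omega)^{L[G_\alpha]}$, so that every real parameter is eventually processed — and that the definitions of $A$ and of $\vec r$ cohere so that $A$ stays $\Pi^1_3$ with all sections nonempty throughout. Balancing ``enough definable structure to reduce every pair'' against ``not so much that $A$ becomes uniformisable'' is the real tension in the construction.

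\textbf{Reading off the conclusions.} From the type-(ii) codings and correctness, a single $\Pi^1_3$ formula referring to $\vec r$ now defines the required uniform reduction set, so the $\Pi^1_3$-reduction property holds. From the type-(iii) codings and permanence, no $\Pi^1_3$ formula with a real parameter uniformises $A$, so the $\Pi^1_3$-uniformisation property fails. (Heuristically the two are in genuine tension: a $\Pi^1_3$ uniformisation of $A$ would let one decode the spine and produce a $\Sigma^1_3$-good wellordering of $2^\omega$, hence the $\Sigma^1_3$-reduction property, whereas for adequate $\omega$-parametrised pointclasses at most one of $\Pi^1_3$ and $\Sigma^1_3$ can have the reduction property — contradicting what has just been forced.) Assembling these facts yields a generic extension of $L$ with the $\Pi^1_3$-reduction property and without the $\Pi^1_3$-uniformisation property, which is the theorem.
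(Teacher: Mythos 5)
There is a genuine gap, and it sits exactly where the paper's main work is done. Your ``permanence'' claim --- that decisions of type (ii)/(iii) ``cannot be undone by later stages'' because of ccc/properness of the tails together with $\Sigma^1_2$-absoluteness --- is not true and cannot be repaired by those tools. Shoenfield absoluteness only makes $\Sigma^1_3$ facts upward absolute: once a real has left a $\Pi^1_3$ set it stays out, but a real that currently \emph{belongs} to a $\Pi^1_3$ set $U^m$ can perfectly well be expelled from it by later coding forcings (each of which adds new reals witnessing the outer $\Sigma^1_2$ matrix). So when your bookkeeping, at some stage, assigns a real $x\in U^m\cup U^n$ to the $U^m$-side of the reduction and codes that decision into the spine, nothing in your setup prevents the pathological outcome that later stages force $x\notin U^m$ while $x\in U^n$ persists; then $x$ lies in $U^m\cup U^n$ but in neither reducing set, and the reduction property fails. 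Relatedly, the instruction ``compute in the current model a reduction pair for $(U^m,U^n)$ and code it'' is not meaningful as stated: the sets that must be reduced are the final-model versions of $U^m,U^n$, which are not yet determined at any intermediate stage, so the placement problem is inherently real-by-real and future-dependent. The paper's entire machinery --- the classes of allowable forcings, the derivation operator producing $\alpha$-allowable forcings, and the fixed point ($\infty$-allowable) that makes the seemingly self-referential placement rules coherent --- exists precisely to handle this non-absoluteness; your sketch does not contain a substitute for it, and without one the construction does not go through.

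A second, smaller gap concerns the failure of uniformization. You propose to diagonalize explicitly against every pair $(\varphi,p)$ by ``coding a witness'' that $\varphi(\cdot,\cdot,p)$ fails to uniformize $A$; but making a $\Pi^1_3$ statement fail \emph{permanently}, while keeping all sections of $A$ nonempty and without wrecking the reduction task, runs into the same absoluteness problem, and you give no mechanism for it (the closing heuristic about decoding the spine into a $\Sigma^1_3$-good wellordering is not an argument). The paper avoids this tension altogether: it never diagonalizes during the iteration, but shows after the fact that in the final model every $\Pi^1_3$ graph of a function is automatically $\Delta^1_3$ (via the reduction machinery applied to the pair consisting of the full plane and the graph), and then a short diagonal argument with a universal $\Pi^1_3$ set produces a single lightface $\Pi^1_3$ set that no $\Pi^1_3$ function can uniformize. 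So both halves of your plan need the missing fixed-point/permanence idea before the remaining steps (which are otherwise in the right spirit: coding over an $L$-like ground model, $\Sigma^1_3$ ``is coded'' statements, bookkeeping of length $\omega_1$ under $\CH$) can be carried out.
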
 
We expect the arguments to be applicable to the canonical inner models with $n$-many Woodin cardinals, denoted by $M_n$, as well, which would yield models in which the $\Pi^1_{n+3}$-reduction property holds, and the $\Pi^1_{n+3}$-uniformization property fails (see \cite{H2} for a paradigmatic example of how to carefully lift the argument designed for $L$ to work for $M_n$ as well).

This article builds on ideas first introduced in \cite{H} and \cite{H2}. The proof of the theorem is, however, far from a mere application of the two mentioned articles.

The main theme which organises the proof is that the problem of forcing the $\Pi^1_3$-reduction property can be rephrased as a fixed point problem for certain sets of $\aleph_1$-sized proper forcings. This fixed point problem can be solved, which unlocks a seemingly self-referential definition of an iteration which will produce a universe of the $\Pi^1_3$-reduction property. A closer inspection shows that in this universe the $\Pi^1_3$-uniformization property fails.

\section{Preliminaries}

The forcings which we will use in the construction are all well-known. We nevertheless briefly introduce them and their main properties. 

\begin{definition}(see \cite{BHK})
 For a stationary $R \subset \omega_1$ the club-shooting forcing for $R$, denoted by $\forceP_R$ consists
 of conditions $p$ which are countable functions from $\alpha+1 <\omega_1$ to $R$ whose image is a closed set. $\forceP_R$ is ordered by end-extension.
 \end{definition}
The club shooting forcing $\forceP_R$ is the paradigmatic example for an $R$-\emph{proper forcing}, where we say that $\forceP$ is $R$-proper if and only if for every condition $p \in \forceP$, every $\theta > 2^{| \forceP|}$ and every countable $M \prec H(\theta)$ such that $M \cap \omega_1 \in R$ and $p, \forceP \in M$, there is a $q<p$ which is $(M, \forceP)$-generic; and a condition $q \in \forceP$ is said to be $(M,\forceP)$-generic if $q \Vdash ``\dot{G} \cap M$ is an $M$-generic filter$"$, where $\dot{G}$ is the canonical name for the generic filter.  See also \cite{Goldstern}. 
\begin{lemma}
 Let $R\subset \omega_1$ be stationary, co-stationary. Then the club-shooting forcing $\forceP_R$ generically adds a club through $R $. Additionally $\forceP_R$ is $R$-proper, $\omega$-distributive and
 hence $\omega_1$-preserving. Moreover $R$ and all its stationary subsets remain stationary in the generic extension. 
\end{lemma}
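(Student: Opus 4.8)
The plan is to extract all four assertions from a single construction of $(M,\forceP_R)$-generic conditions. First, the club: if $G$ is $\forceP_R$-generic, set $C := \bigcup_{p\in G}\operatorname{ran}(p)\subseteq R$. For unboundedness of $C$ it suffices to note that $\{p : \sup\operatorname{ran}(p) > \beta\}$ is dense for every $\beta<\omega_1$, which holds because $R$, being stationary, is unbounded: given $p$, pick $\gamma\in R$ above $\max(\beta,\sup\operatorname{ran}(p))$ and extend $p$ by putting $\gamma$ on top, the image of the extension still being closed since $\gamma$ sits strictly above the old closed image. Closedness of $C$ needs no density argument: a limit point $\eta<\omega_1$ of $C$ already appears as a limit point of $\operatorname{ran}(p)$ for any $p\in G$ whose range reaches past $\eta$, and $\operatorname{ran}(p)$ is closed, so $\eta\in\operatorname{ran}(p)\subseteq C$. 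Hence $C$ is a club subset of $R$.

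For $R$-properness, fix $p\in\forceP_R$, a large regular $\theta$, and a countable $M\prec H(\theta)$ with $p,\forceP_R\in M$ and $\delta := M\cap\omega_1\in R$. Enumerate the dense subsets of $\forceP_R$ lying in $M$ as $\langle D_n : n<\omega\rangle$, and fix $\beta_n\nearrow\delta$ cofinal. Recursively build $p=p_0\geq p_1\geq\cdots$ inside $M$ with $p_{n+1}\in D_n$, $\max(\operatorname{dom} p_{n+1})>\beta_n$ and $\sup\operatorname{ran}(p_{n+1})>\beta_n$; such a condition exists by density of $D_n$ together with unboundedness of $R$, and it can be found in $M$ by elementarity since the witnessing existential statement has parameters in $M$. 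Put $q := \big(\bigcup_n p_n\big)\cup\{(\delta,\delta)\}$. The two cofinality clauses make $\bigcup_n p_n$ a function with domain $\delta$ whose range has supremum $\delta$, so appending the value $\delta$ at $\delta$ — admissible precisely because $\delta\in R$ — yields $q\in\forceP_R$ with $q\leq p_n$ for all $n$, hence $q\leq p$. Since $q$ refines a member of $D_n\cap M$ for every dense $D_n\in M$, we get $q\Vdash$``$\dot G\cap M$ is an $M$-generic filter''; this is $R$-properness.

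The remaining clauses are corollaries of the same construction. For $\omega$-distributivity, given dense open $\langle E_n : n<\omega\rangle$ and $p\in\forceP_R$, use stationarity of $R$ to pick a countable $M\prec H(\theta)$ with $p,\forceP_R,\langle E_n : n<\omega\rangle\in M$ and $M\cap\omega_1\in R$, which is possible since the set of such $M\cap\omega_1$ contains a club. Running the construction above with the $E_n$ among the dense sets of $M$ produces $q\leq p$ with $q\leq p_{n+1}\in E_n$, so $q\in E_n$ by openness; thus $\bigcap_n E_n$ is dense and $\forceP_R$ adds no new $\omega$-sequence of ordinals, whence $\omega_1$ is preserved (this also follows directly from $R$-properness). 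For preservation of stationarity, let $S\subseteq R$ be stationary in $V$ and let $p$ force $\dot C$ to be a club in $\omega_1$. Pick a countable $M\prec H(\theta)$ with $p,\forceP_R,\dot C\in M$ and $\delta := M\cap\omega_1\in S$, using stationarity of $S$; since $S\subseteq R$ we still have $\delta\in R$, so the $R$-properness construction applies verbatim. Because $\{r : r\Vdash\exists\gamma>\beta\ (\gamma\in\dot C)\}$ is dense and lies in $M$ for each $\beta\in M$, the resulting $(M,\forceP_R)$-generic $q\leq p$ forces $\dot C$ to be cofinal in $\delta$, hence, by closedness of $\dot C$, $q\Vdash\delta\in\dot C$; as $\delta\in S$ this gives $q\Vdash\dot C\cap S\neq\emptyset$. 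So $S$, and in particular $R$ itself, stays stationary.

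The one delicate step is the $R$-properness construction: the bookkeeping that meets every dense set of $M$ must be interleaved with the separate bookkeeping driving both the domains and the ranges of the $p_n$ cofinally up to $\delta$, so that capping off with the pair $(\delta,\delta)$ produces a genuine condition — and it is exactly at that capping step that the hypothesis $M\cap\omega_1\in R$ (respectively $\in S$) is essential. Everything else is either a routine variation on, or a formal consequence of, this single argument.
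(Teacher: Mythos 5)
Your proposal is correct, and for the one clause the paper actually proves in detail ($\omega$-distributivity) it is essentially the paper's argument: a countable $M\prec H(\theta)$ with $M\cap\omega_1\in R$, a fusion sequence meeting the relevant dense sets with domains and ranges driven cofinally to $M\cap\omega_1$, capped off by the pair $(\delta,\delta)$, which is a condition exactly because $\delta\in R$. The remaining clauses (genericity of the club, $R$-properness, preservation of stationary subsets of $R$) are outsourced by the paper to Goldstern's notes, and your unified treatment of them via the same $(M,\forceP_R)$-generic construction reproduces those standard proofs correctly, modulo the usual harmless imprecision that in the stationarity argument one should take, for each $\beta\in M\cap\omega_1$, the dense set of conditions forcing some \emph{specific} $\gamma>\beta$ into $\dot C$, so that the witnessing ordinals lie in $M$ and hence below $\delta$.
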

\begin{proof}
We shall just show that $\forceP_R$ does not add reals. From the argument one can derive $\omega$-distributivity of $\forceP_R$ easily. The rest can be found in \cite{Goldstern}, Fact 3.5, 3.6 and Theorem 3.7. 

Let $p \in \forceP_R$ and $\dot{x}$ be such that $p \Vdash \dot{x} \in 2^{\omega}$. Without loss of generality we assume that $\dot{x}$ is a nice name for a real, i.e. given by an $\omega$-sequence of $\forceP_R$-maximal antichains. We shall find a real $x$ in the ground model and a condition $q < p$ such that $q \Vdash \dot{x}=x$. For this, fix $\theta > 2^{|\forceP_R|}$ and a countable elementary submodel $M \prec H(\theta)$ which contains $\forceP_R$, $\dot{x}$ and $p$ as elements and which additionally satisfies that $M \cap \omega_1  \in R$. Note that we can always assume that such an $M$ exists by the stationarity of $R$. We recursively construct a descending sequence $(p_n)_{n \in \omega} \subset M$ of conditions below $p=p_0$ such that every $p_n$ decides the value of $\dot{x}(n)$ and such that the sequence of $\operatorname{max}_{n \in \omega} \operatorname{ran}(p_n)$  converges to $M \cap \omega_1$.
We let $x(n) \in 2$ be the value of $\dot{x}$ as forced by $p_n$, and let $x= (x(n))_{n \in \omega} \in 2^{\omega}$.

Let $q'=\bigcup_{n \in \omega} p_n \subset (M \cap \omega_1)$. We set $q:= q' \cup \{ (\operatorname{sup} \operatorname{dom} (q'), M \cap \omega_1) \}$, which is function from $\operatorname{dom} (q') +1$ to $R$ with closed image, and hence a condition in $\forceP_R$, stronger than $p$,  which forces that $\dot{x} =x$ as desired.

\end{proof}

We will choose a family of $R_{\beta}$'s so that we can shoot an arbitrary pattern of clubs through its elements such that this pattern can be read off
from the stationarity of the $R_{\beta}$'s in the generic extension.
For that it is crucial to recall that for stationary, co-stationary  $R \subset \omega_1$, $R$-proper posets can be iterated with countable support and always yield an $R$-proper forcing again. This is proved exactly as in the well-known case for plain proper forcings (see \cite{Goldstern}, Theorem 3.9 and the subsequent discussion).
\begin{fact}
Let $R \subset \omega_1$ be stationary, co-stationary. Assume that $(\forceP_{\alpha} \, : \, \alpha< \gamma)$ is a countable support iteration, let $\forceP_{\gamma}$ denote the resulting partial order obtained using countable support limit and assume also that at every stage $\alpha$, $\forceP_{\alpha} \Vdash  \dot{\forceP}({\alpha})$ is $R$-proper. Then $\forceP_{\gamma}$ is $R$-proper.
\end{fact}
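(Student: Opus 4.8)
The plan is to mimic the classical proof that countable support iterations of proper forcings are proper, relativising every occurrence of ``$(M,\forceP)$-generic'' to models $M$ with $M \cap \omega_1 \in R$ and checking that this extra demand propagates through the construction. As usual, the statement one proves by induction on $\gamma$ is slightly stronger than the Fact, so that it feeds the limit step: for every $\alpha \le \gamma$, every sufficiently large regular $\theta$, every countable $M \prec H(\theta)$ with $M \cap \omega_1 \in R$ and $(\forceP_\beta : \beta \le \gamma), p, \alpha \in M$, and every $(M,\forceP_\alpha)$-generic condition $q \le p \restriction \alpha$, there is an $(M,\forceP_\gamma)$-generic condition $r \le p$ with $r \restriction \alpha = q$. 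Taking $\alpha = 0$ and $q$ the trivial condition recovers the Fact.

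The case $\gamma = \alpha$ is trivial, and the successor case $\gamma = \delta + 1$ is routine: apply the inductive hypothesis to get an $(M,\forceP_\delta)$-generic $r' \le p \restriction \delta$ with $r' \restriction \alpha = q$. A density argument using $M$-names for countable ordinals shows $r' \Vdash M[\dot G_\delta] \cap \omega_1 = M \cap \omega_1$, hence $r' \Vdash M[\dot G_\delta] \cap \omega_1 \in R$; moreover $r' \Vdash M[\dot G_\delta] \prec H(\theta)^{V[\dot G_\delta]}$ with $p(\delta) \in M[\dot G_\delta]$. Then, in $V[\dot G_\delta]$, the hypothesis that $\dot\forceP(\delta)$ is $R$-proper supplies an $(M[\dot G_\delta], \dot\forceP(\delta))$-generic condition below $p(\delta)$; pulling this back to a $\forceP_\delta$-name and appending it to $r'$ gives the desired $r$.

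The limit case is the heart of the matter and is handled by the usual fusion argument. Let $\langle D_n : n \in \omega\rangle$ enumerate the dense open subsets of $\forceP_\gamma$ lying in $M$, and fix an increasing sequence $\langle \alpha_n : n \in \omega \rangle$ cofinal in $\sup(M \cap \gamma)$ with $\alpha_0 = \alpha$ (if $\operatorname{cf}(\gamma)=\omega$ one may take it cofinal in $\gamma$; if $\gamma \in M$ has uncountable cofinality the tail past $\sup(M\cap\gamma)$ is absorbed by the direct-limit structure of the iteration). One recursively builds a descending sequence $p = p_0 \ge p_1 \ge \cdots$ of conditions in $M$ together with $(M,\forceP_{\alpha_n})$-generic conditions $q_n \le p_n \restriction \alpha_n$ with $q_0 = q$ and $q_{n+1}\restriction \alpha_n = q_n$, arranging at step $n$ that $q_{n+1}$ forces $p_{n+1}\restriction \alpha_{n+1}$ into $\dot G_{\alpha_{n+1}}$ with $p_{n+1} \in D_n \cap M$; this step uses elementarity of $M$ together with the successor/inductive case applied inside $M$'s worldview. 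Put $r = \bigcup_n q_n$. Its support lies in $\sup(M \cap \gamma)$ and is countable since $M$ is countable, so $r \in \forceP_\gamma$; coordinatewise the inductive hypothesis guarantees $r \restriction \beta \Vdash r(\beta) \in \dot\forceP(\beta)$, and $r \le p$ because $r \restriction \alpha_n \le p \restriction \alpha_n$ for cofinally many $n$ and $\operatorname{supp}(p) \subseteq \sup(M\cap\gamma)$. Finally $r$ is $(M,\forceP_\gamma)$-generic: any maximal antichain $A \in M$ refines to some $D_n \in M$, and the fact that $q_{n+1}$ forces an element of $D_n \cap M$ into $\dot G_{\alpha_{n+1}}$, combined with $r$ lying below $q_{n+1}$ and the relevant tail, shows $A \cap M$ is predense below $r$.

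The only genuinely new bookkeeping compared to the proper case is the repeated verification that $M[\dot G_{\alpha_n}] \cap \omega_1 = M \cap \omega_1$ stays in $R$, so that the $R$-properness of each iterand $\dot\forceP(\alpha_n)$ can be invoked; as noted, this is immediate from $(M,\forceP_{\alpha_n})$-genericity via a density argument with $M$-names for countable ordinals, and no other part of the classical argument is sensitive to the constraint $M \cap \omega_1 \in R$ — which is precisely why the literature can assert that the proof goes through verbatim. I expect the limit/fusion step to be the part requiring the most care to write out in full, since one must simultaneously maintain countability of the support, coordinatewise correctness of the limit condition, genericity over $M$, and membership $M \cap \omega_1 \in R$ at every finite stage.
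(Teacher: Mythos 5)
Your overall strategy is exactly the one the paper intends: the paper gives no proof of this Fact at all, but simply asserts that the classical argument for countable support iterations of proper forcings (its reference is Goldstern, Theorem 3.9) goes through verbatim once every countable $M \prec H(\theta)$ is required to satisfy $M \cap \omega_1 \in R$. Your identification of the only genuinely new point — that an $(M,\forceP_\alpha)$-generic condition forces $M[\dot G_{\alpha}] \cap \omega_1 = M \cap \omega_1$, hence $M[\dot G_{\alpha}] \cap \omega_1 \in R$, so the $R$-properness of each iterand can be invoked — is correct, and the successor step is fine.

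There is, however, a genuine gap in the limit step as you have set it up: your induction hypothesis is stated for actual conditions $p \in M$ with $q \le p\upharpoonright\alpha$, and in the fusion you try to build actual conditions $p_{n+1} \in D_n \cap M$ with $q_{n+1}\upharpoonright\alpha_n = q_n$ and $q_{n+1} \Vdash p_{n+1}\upharpoonright\alpha_{n+1} \in \dot G_{\alpha_{n+1}}$. This recursion cannot be carried out: $q_n$ was only arranged to force $p_n\upharpoonright\alpha_n$ into $\dot G_{\alpha_n}$, and a fixed strengthening $p_{n+1} \le p_n$ lying in $D_n \cap M$ will in general be incompatible with information $q_n$ has already decided generically, so no $q_{n+1}$ extending $q_n$ on $\alpha_n$ can force $p_{n+1}\upharpoonright\alpha_n \in \dot G_{\alpha_n}$. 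Which member of $D_n \cap M$ one can pass to depends on the generic below $\alpha_n$; this is precisely why the classical proof carries $\forceP_{\alpha_n}$-names $\dot p_{n+1} \in M$ for conditions in $D_n \cap M$ and proves the stronger extension lemma: if $q$ is $(M,\forceP_\alpha)$-generic and $\dot p \in M$ is a $\forceP_\alpha$-name for a condition in $\forceP_\gamma \cap M$ with $q \Vdash \dot p\upharpoonright\alpha \in \dot G_\alpha$, then there is an $(M,\forceP_\gamma)$-generic $r$ with $r\upharpoonright\alpha = q$ and $r \Vdash \dot p \in \dot G_\gamma$. With the induction hypothesis strengthened in this name form, your fusion (together with the $R$-bookkeeping, which needs no further change) does go through and yields the Fact; without it, the step "arranging at step $n$ that $q_{n+1}$ forces $p_{n+1}\upharpoonright\alpha_{n+1}$ into $\dot G_{\alpha_{n+1}}$" fails.
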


Once we decide to shoot a club through a stationary, co-stationary subset of $\omega_1$, this club will belong to all $\omega_1$-preserving outer models. This hands us a robust method of coding arbitrary information into a suitably chosen sequence of sets
which has been used several times already (see e.g. \cite{SyVera}).
\begin{lemma}\label{coding with stationary sets}
 Let $(R_{ \alpha} \, : \, \alpha < \omega_1)$ be a partition of $\omega_1$ into $\aleph_1$-many stationary sets, let  $r \in 2^{\omega_1}$ be arbitrary, and let $\forceP$ be a countable support iteration $(\forceP_{\alpha} \, : \, \alpha < \omega_1)$, inductively defined via \[\forceP(\alpha) := \dot{\forceP}_{\omega_1 \backslash R_{2 \cdot \alpha}} \text{ if } r(\alpha)=1 \] and
 \[\forceP({\alpha}) := \dot{\forceP}_{\omega_1 \backslash R_{(2 \cdot\alpha) +1}} \text{ if } r(\alpha)=0.\]
 Then in the resulting generic extension $V[\forceP]$, we have that $\forall \alpha < \omega_1:$ \[ r(\alpha)=1 \text{ if and only if }
 R_{2 \cdot \alpha} \text{  is nonstationary, }\]  and \[ r_{\alpha}=0 \text{ iff } R_{(2 \cdot \alpha)+1} \text{ is nonstationary.} \]
\end{lemma}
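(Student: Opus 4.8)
The plan is to run an induction along the iteration showing that the construction never destroys anything we do not explicitly target, and stays well-defined and $\omega_1$-preserving throughout, and then to read off both directions of the equivalence at the top. The two inputs are the club-shooting Lemma above (every $\forceP_R$ is $R$-proper and leaves $R$ together with all of its stationary subsets stationary) and the Fact above (countable support iterations of $R$-proper posets are $R$-proper). I will also use freely the routine consequences of the definition of $(M,\forceP)$-genericity that an $R$-proper poset preserves $\omega_1$ and keeps $R$ stationary, together with the trivial monotonicity ``if $\forceP$ is $S$-proper and $R \subseteq S$ then $\forceP$ is $R$-proper'' (just restrict the class of test models $M$).

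For the setup, write $\gamma_\alpha := 2 \cdot \alpha$ if $r(\alpha)=1$ and $\gamma_\alpha := 2\cdot\alpha + 1$ if $r(\alpha)=0$, so that $\forceP(\alpha)$ is the club-shooting forcing $\dot{\forceP}_{\omega_1 \setminus R_{\gamma_\alpha}}$; call $\beta < \omega_1$ \emph{safe} if $\beta \ne \gamma_\delta$ for all $\delta$, and \emph{$\alpha$-safe} if $\beta \notin \{\gamma_\delta : \delta < \alpha\}$. The reason for spreading the targets over even and odd indices is the bookkeeping inequality $\gamma_\delta \le 2\delta+1 < 2\alpha \le 2\delta'$ for $\delta < \alpha \le \delta'$; it gives that $\gamma_\alpha$ and $2\alpha+2$ are always $\alpha$-safe and that for each $\alpha$ exactly one of $2\alpha,2\alpha+1$ is safe. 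The core is then an induction on $\delta < \omega_1$ with the statement: \emph{in $V^{\forceP_\delta}$ the set $\omega_1 \setminus R_{\gamma_\delta}$ is stationary and co-stationary, so $\forceP(\delta) = \forceP_{\omega_1 \setminus R_{\gamma_\delta}}$ is a legitimate club-shooting poset, and it is $R_\beta$-proper for every $\beta \ne \gamma_\delta$.} In the step at $\delta$: by the Fact applied to $\forceP_\delta$ together with the inductive statements at the $\delta' < \delta$, the poset $\forceP_\delta$ is $R_\beta$-proper for every $\delta$-safe $\beta$; since $\gamma_\delta$ and $2\delta+2$ are $\delta$-safe, $\forceP_\delta$ preserves $\omega_1$ and the stationarity of $R_{\gamma_\delta}$ and of $R_{2\delta+2}$, so in $V^{\forceP_\delta}$ the set $R_{\gamma_\delta}$ is still stationary (hence its complement co-stationary) and $\omega_1 \setminus R_{\gamma_\delta} \supseteq R_{2\delta+2}$ is still stationary. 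Thus $\forceP(\delta)$ is well-defined and, by the club-shooting Lemma, $(\omega_1\setminus R_{\gamma_\delta})$-proper; by monotonicity it is $R_\beta$-proper for every $\beta \ne \gamma_\delta$, because $R_\beta \subseteq \omega_1 \setminus R_{\gamma_\delta}$ whenever $\beta \ne \gamma_\delta$ (the partition being disjoint). This closes the induction.

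To conclude, apply the Fact once more: $\forceP = \forceP_{\omega_1}$ (and likewise every $\forceP_\alpha$) is $R_\beta$-proper for every safe $\beta$; in particular $\forceP$ preserves $\omega_1$, so ``(non)stationary'' is unambiguous in $V[\forceP]$, and every $R_\beta$ with $\beta$ safe stays stationary there. In the other direction, for a target $\gamma = \gamma_\delta$ we add at stage $\delta$ a club $C \subseteq \omega_1 \setminus R_\gamma$, so $R_\gamma$ is nonstationary in $V[\forceP_{\delta+1}]$, and since $V[\forceP]$ is an $\omega_1$-preserving extension of $V[\forceP_{\delta+1}]$, $C$ remains a club and $R_\gamma$ remains nonstationary in $V[\forceP]$. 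Now fix $\alpha < \omega_1$: if $r(\alpha)=1$ then $2\cdot\alpha = \gamma_\alpha$ is a target, hence $R_{2\cdot\alpha}$ is nonstationary, while if $r(\alpha)=0$ then $2\cdot\alpha$ is safe, hence $R_{2\cdot\alpha}$ stays stationary; so $r(\alpha)=1 \iff R_{2\cdot\alpha}$ is nonstationary, and symmetrically $r(\alpha)=0 \iff R_{(2\cdot\alpha)+1}$ is nonstationary.

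I expect the only real difficulty to be the inductive step just described --- namely verifying that $\omega_1 \setminus R_{\gamma_\delta}$ is still stationary \emph{and} co-stationary in $V^{\forceP_\delta}$, so that the forcing at stage $\delta$ neither degenerates nor accidentally kills a set we will need later; this is precisely where the even/odd splitting of the indices is used, in combination with the preservation content of the club-shooting Lemma and of the iteration Fact. Once that is in place, both halves of the equivalence and the preservation of $\omega_1$ are purely formal.
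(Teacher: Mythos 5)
Your proof is correct and follows essentially the same route as the paper's: the targeted $R_{\gamma_\alpha}$ is killed by the explicitly added club, while for any non-targeted $\beta$ every factor is $R_\beta$-proper (since $R_\beta\subseteq\omega_1\setminus R_{\gamma_\delta}$ by disjointness), so the iteration Fact makes the whole iteration $R_\beta$-proper and hence $\omega_1$- and stationarity-preserving. The only difference is presentational: you organize this as an induction that also verifies at each stage that $\omega_1\setminus R_{\gamma_\delta}$ remains stationary and co-stationary (a well-definedness point the paper leaves implicit) and you phrase the converse direction contrapositively rather than by contradiction.
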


\begin{proof}
Assume first without loss of generality that $r(0)=1$, then the iteration will be $ R_1$-proper, hence $\omega_1$-preserving. Now let $\alpha < \omega_1$ be arbitrary and assume that $r(\alpha)=1$ in $V[{\forceP}]$. Then by definition of the iteration
 we must have shot a club through the complement of $R_{2 \alpha}$, thus it is nonstationary
 in $V[{\forceP}]$. 
 
 On the other hand, if $R_{2 \alpha}$ is nonstationary in $V[{\forceP}]$, then we assume for a contradiction that we did not use $\forceP_{\omega_1 \backslash R_{ 2 \cdot \alpha}}$ in the iteration $\forceP$.
Note that for $\beta \ne 2 \cdot \alpha$, every forcing of the form $\forceP_{\omega_1 \backslash R_{\beta}}$ is $R_{2 \cdot \alpha}$-proper as $\forceP_{\omega_1 \backslash R_{\beta}}$ is $\omega_1 \backslash R_{\beta}$-proper and $R_{2\cdot \alpha} \subset \omega_1 \backslash R_{\beta}$.
 Hence the iteration $\forceP$ will  be $R_{2 \cdot \alpha}$-proper, thus the stationarity of $R_{2 \cdot \alpha}$ is preserved. But this is a contradiction.

 \end{proof}

The second forcing we use is the almost disjoint coding forcing due to R. Jensen and R. Solovay (see \cite{JensenSolovay}). We will identify subsets of $\omega$ with their characteristic function and will use the word reals for both elements of $2^{\omega}$ as well as for subsets of $\omega$ respectively.
Let $D=\{d_{\alpha} \, \: \, \alpha < \aleph_1 \}$ be a family of almost disjoint subsets of $\omega$, i.e. a family such that if $r, s \in D$ then 
$r \cap s$ is finite. Let $X\subset  \kappa$ for $\kappa \le 2^{\aleph_0}$ be a set of ordinals. Then there 
is a ccc forcing, the almost disjoint coding $\mathbb{A}_D(X)$ which adds 
a new real $x$ which codes $X$ relative to the family $D$ in the following way
$$\alpha \in X \text{ if and only if } x \cap d_{\alpha} \text{ is finite.}$$
\begin{definition}
 The almost disjoint coding $\mathbb{A}_D(X)$ relative to an almost disjoint family $D$ consists of
 conditions $(r, R) \in \omega^{<\omega} \times D^{<\omega}$ and
 $(s,S) \le (r,R)$ holds if and only if
 \begin{enumerate}
  \item $r \subseteq s$ and $R \subseteq S$.
  \item If $\alpha \in X$ and $d_{\alpha} \in R$ then $r \cap d_{\alpha} = s \cap d_{\alpha}$.
 \end{enumerate}
\end{definition}
For the rest of this paper we let $D \in L$ be the definable almost disjoint family of reals one obtains when recursively adding the $<_L$-least real to the family which is almost disjoint from all the previously chosen reals. 
Whenever we use almost disjoint coding forcing, we assume that we code relative to this fixed almost disjoint family $D$.

The last two forcings we briefly discuss are Jech's forcing for adding a Suslin tree with countable conditions and, given a Suslin tree $S$, the associated forcing which adds a cofinal branch through $S$. 
Recall that a set theoretic tree $(S, <)$ is a Suslin tree if it is a normal tree of height $\omega_1$
and has no uncountable antichain. Forcing with a Suslin tree $S$, where conditions are just nodes in $S$, and which we always denote with $S$ again, is a ccc forcing of size $\aleph_1$. 
Jech's forcing to generically add a Suslin tree is defined as follows.

\begin{definition}
 Let $\forceP_J$ be the forcing whose conditions are
 countable, normal trees ordered by end-extension, i.e. $T_1 \le T_2$ if and only
 if $\exists \alpha \le \operatorname{height}(T_1) \, T_2= \{ t \upharpoonright \alpha \, : \, t \in T_1 \}$
\end{definition}
It is well-known that $\forceP_J$ is $\sigma$-closed and
adds a Suslin tree, in fact $\forceP_J$ is forcing equivalent to $\mathbb{C} (\omega_1)$, i.e. the forcing for adding a Cohen subset to $\omega_1$ with countable conditions. The generically added tree $T$ has 
the additional property that for any Suslin tree $S$ in the ground model
$S \times T$ will be a Suslin tree in $V[G]$. This can be used to obtain a robust coding method (see also \cite{Ho} for more applications)
\begin{lemma}\label{1 preservation of Suslin trees}
 Let $V$ be a universe and let $S \in V$ be a Suslin tree. If $\forceP_J$ is 
 Jech's forcing for adding a Suslin tree, if $g \subset \forceP_J$ is generic  and if $T=\bigcup g $ is the generic tree, and if we let $T\in V[g]$ be the forcing which adds an $\omega_1$-branch $b$ through $T$,
 then $$V[g][b] \models  S \text{ is Suslin.}$$
\end{lemma}

\begin{proof}
Let $\dot{T}$ be the $\forceP_J$-name for the generic Suslin tree. We claim that $\forceP_J \ast \dot{T}$ has a dense subset which is $\sigma$-closed. As $\sigma$-closed forcings will always preserve ground model Suslin trees, this is sufficient. To see why the claim is true consider the following set:
$$\{ (p, \check{q}) \, : \, p \in \forceP_J \land height(p)= \alpha+1  \land  \check{q} \text{ is a node of $p$ of level } \alpha \}.$$
It is easy to check that this set is dense and $\sigma$-closed in $\forceP_J \ast \dot{T}$.

\end{proof}

A similar observation shows that a we can add an $\omega_1$-sequence of
such Suslin trees with a countably supported iteration.

\begin{lemma}\label{ManySuslinTrees}
 Let $S$ be a Suslin tree in $V$ and let $\forceP$ be a countably supported
 product of length $\omega_1$ of forcings $\forceP_J$ with $G$ its generic filter. Then in
 $V[G]$ there is an $\omega_1$-sequence of Suslin trees $\vec{T}=(T_{\alpha} \, : \, \alpha \in \omega_1)$ such
that for any finite $e \subset \omega$
the tree $S \times \prod_{i \in e} T_i$ will be a Suslin tree in $V[G]$.
\end{lemma}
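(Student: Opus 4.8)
The plan is to follow the template of the preceding lemma, adding one inductive layer to handle the finite product. First I would record that a countably supported product of $\sigma$-closed forcings is again $\sigma$-closed: given a descending $\omega$-chain of conditions one takes the coordinatewise union, whose support is a countable union of countable sets, hence countable. Thus $\forceP$ is $\sigma$-closed, $\omega_1$ is preserved, and the generic $G$ yields, for each $\alpha<\omega_1$, the Suslin tree $T_\alpha$ added in coordinate $\alpha$; write $\vec{T}=(T_\alpha : \alpha<\omega_1)$. As in the previous lemma, $\sigma$-closed forcing preserves ground model Suslin trees, so $S$ is still Suslin in $V[G]$, and since $\forceP$ factors as the $\alpha$-th coordinate (Jech's forcing, which adds a Suslin tree) times a $\sigma$-closed remainder, each $T_\alpha$ is Suslin in $V[G]$; so $\vec T$ is genuinely a sequence of Suslin trees.

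Now fix a finite set of indices $e=\{i_0<i_1<\dots<i_{n-1}\}\subset\omega$, and regard $S\times\prod_{k=j}^{n-1}T_{i_k}$ as the tree whose $\beta$-th level is the product of the $\beta$-th levels of the factors (this is again a normal tree of height $\omega_1$). I would prove, by downward induction on $j\le n$, that $S\times\prod_{k=j}^{n-1}T_{i_k}$ is Suslin in $V[G]$. The case $j=n$ is the tree $S$ itself, handled above. For the step from $j+1$ to $j$, factor $\forceP=\forceP_J^{(i_j)}\times\forceP'$, where $\forceP_J^{(i_j)}$ is the $i_j$-th coordinate and $\forceP'$ is the countably supported product of the remaining copies; correspondingly $G=G_{i_j}\times G'$ with $G_{i_j}$ generic over $V[G']$. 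The tree $T_{i_j}$ lies in $V[G_{i_j}]$, while $S$ and every $T_{i_k}$ with $k>j$ lie in $V[G']$, so the poset $\forceQ:=S\times\prod_{k=j+1}^{n-1}T_{i_k}$ belongs to $V[G']$; let $H$ be $\forceQ$-generic over $V[G]$. Since $\forceQ\in V[G']$, $H$ is also $\forceQ$-generic over $V[G']$, and since $H$ is generic over $V[G'][G_{i_j}]$, the pair $(G_{i_j},H)$ is $(\forceP_J\times\forceQ)$-generic over $V[G']$; hence $G_{i_j}$ is $\forceP_J$-generic over $V[G'][H]$. Therefore $T_{i_j}$ is Suslin in $V[G'][H][G_{i_j}]=V[G][H]$, by Jech's theorem applied over the model $V[G'][H]$ (this being a $\ZFC$ theorem).

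Finally I would combine this with the inductive hypothesis via the standard fact that, for normal trees $A,B$ of height $\omega_1$ in a model $M$, the product $A\times B$ is Suslin in $M$ as soon as $A$ is Suslin in $M$ and $B$ is Suslin in the $A$-generic extension of $M$ — the point being that $(A\times B)$-forcing factors as $A$-forcing followed by $B$-forcing so that the countable chain condition passes through the two-step iteration, and that if $A$ has no uncountable branch then neither does $A\times B$, by projection. Applying this with $A=\forceQ=S\times\prod_{k=j+1}^{n-1}T_{i_k}$ (Suslin in $V[G]$ by the inductive hypothesis) and $B=T_{i_j}$ (Suslin in $V[G][H]$, the $A$-generic extension of $V[G]$, by the previous paragraph) gives that $S\times\prod_{k=j}^{n-1}T_{i_k}$ is Suslin in $V[G]$, which completes the induction; the case $j=0$ is the statement of the lemma. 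I expect the only real obstacle to be bookkeeping: keeping the mutual-genericity factorisations straight and checking that $\omega_1$ is preserved throughout — which it is, since $\forceP$ is $\sigma$-closed and each auxiliary poset $\forceQ$ is ccc in $V[G]$ by the inductive hypothesis, so every model in play shares the same $\omega_1$.
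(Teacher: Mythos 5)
Your proof is correct. Strictly speaking the paper offers no proof of this lemma at all: it is stated after the single-tree lemma with only the remark that ``a similar observation'' applies, the observation being that the two-step iteration $\forceP_J \ast \dot{T}$ has a dense $\sigma$-closed subset. The intended argument is thus to repeat that density/closure computation for $\forceP \ast \prod_{i\in e}\dot{T}_i$ (conditions whose coordinates in $e$ all have the same successor height, paired with top-level nodes), conclude that $S$ stays Suslin after forcing with $\forceP$ and then with the finite product of generic trees, and finish with the same translation between Suslinity of the product tree and ccc-ness of the two-step iteration that you call the ``standard fact''. Your route reaches the same key forcing statement --- $\Vdash_{\forceQ} \check{T}_{i_j}$ is Suslin over $V[G]$ --- differently: instead of a fusion-type closure argument for the combined iteration, you use the product decomposition $\forceP \cong \forceP_J^{(i_j)}\times\forceP'$ and mutual genericity, so that $G_{i_j}$ remains $\forceP_J$-generic over $V[G'][H]$ and Jech's theorem applies over that model; induction on $|e|$ then does the rest. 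What this buys is that the only closure fact you ever need is the $\sigma$-closedness of the plain product $\forceP$, at the cost of the product-lemma bookkeeping, which you handle correctly (including the point that all models in play share $\omega_1$). Two things worth making explicit in a write-up: the ``standard fact'' requires $B$ to be Suslin in \emph{every} $A$-generic extension (equivalently, forced by the trivial condition), which your argument does give since $H$ was arbitrary; and your opening claim that each $T_\alpha$ is Suslin in $V[G]$ needs the remark that the remaining coordinates stay $\sigma$-closed over $V[G_\alpha]$ (as $\forceP_J$ adds no new $\omega$-sequences of ground model sets) --- or can simply be read off from the case $e=\{\alpha\}$ of your induction, which proves it by the same mutual-genericity argument.
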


These sequences of Suslin trees will be used for coding in our proof and deserve a name, consistent with \cite{FS} and \cite{Ho}.
\begin{definition}
 Let $\vec{T} = (T_{\alpha} \, : \, \alpha < \kappa)$ be a sequence of Suslin trees. We say that the sequence is an 
 independent family of Suslin trees if for every finite set of pairwise distinct indices $e= \{e_0, e_1,...,e_n\} \subset \kappa$ the product $T_{e_0} \times T_{e_1} \times \cdot \cdot \cdot \times T_{e_n}$ 
 is a Suslin tree again.
\end{definition}
\subsection{The ground model $W$ of the iteration}
We have to first create a suitable ground model $W$ over which the actual iteration will take place. $W$ will be a generic extension of $L$ which has no new reals. Moreover $W$ has the crucial property that in $W$ there is an $\omega_1$-sequence $\vec{S}$ of $\omega_1$ trees which is $\Sigma_1(\{\omega_1\})$-definable over $H(\omega_2)^W$ (i.e. the definiton is a $\Sigma_1$-formula with $\omega_1$ as its only parameter) and which forms an independent sequence of Suslin trees in an inner model of $W$\footnote{That $\vec{S}$ is an independent sequence of Suslin trees in an inner model of $W$ only has technical advantages. We have been informed by G. Fuchs that an independent sequence of Suslin trees of length $\omega_1$ actually exists in $L$ already (see \cite{FH}). Thus forcing its existence, as is done in this section, is redundant. Note however that our approach has several advantages, e.g. it is possible to force the existence of independent and definable sequences of Suslin trees of length $\omega_2$ which, of course, can not exist in $L$.  }. The sequence $\vec{S}$ will enable a coding method we will use throughout this article all the time.

To form $W$, we start with G\"odels constructible universe $L$ as our 
ground model.
We first fix an appropriate sequence of stationary, co-stationary subsets of $\omega_1$ as follows.
Recall that $\diamondsuit$ holds in $L$, i.e. over $L_{\omega_1}$ there is a 
$\Sigma_1$-definable sequence $(a_{\alpha} \, : \, \alpha < \omega_1)$ of countable subsets of $\omega_1$
such that any set $A \subset \omega_1$ is guessed stationarily often by the $a_{\alpha}$'s, i.e.
$\{ \alpha < \omega_1 \, : \, a_{\alpha}= A \cap \alpha \}$ is a stationary and co-stationary subset of $\omega_1$.
The $\diamondsuit$-sequence can be used to produce an easily definable sequence of stationary, co-stationary subsets: we list the reals in $L$ in an $\omega_1$ sequence $(r_{\alpha} \, : \, \alpha < \omega_1)$, and let $\tilde{r}_{\alpha} \subset \omega_1$ be the unique element of $2^{\omega_1}$ which copies $r_{\alpha}$ on its first $\omega$-entries followed by $\omega_1$-many 0's. Then, identifying $\tilde{r}_{\alpha} \in 2^{\omega_1}$ with the according subset of $\omega_1$, we define for every $\beta < \omega_1$
a stationary, co-stationary set in the following way:
\[R'_{\beta} = \{ \alpha < \omega_1 \, : \, a_{\alpha}= \tilde{r}_{\beta} \cap \alpha \}.\] It is clear that $\forall \alpha \ne \beta (R'_{\alpha} \cap R'_{\beta} \in \hbox{NS}_{\omega_1})$ and we obtain a sequence of pairwise disjoint stationary sets as usual via setting for every $\beta < \omega_1$ \[R_{\beta}= R'_{\beta} \backslash \bigcup_{\alpha < \beta} R'_{\alpha}.\] and let $\vec{R}=(R_{\alpha} \, : \, \alpha < \omega_1)$. Via picking out one element of $\vec{R}$ and re-indexing we assume without loss of generality that there is a stationary, co-stationary $R \subset \omega_1$, which has pairwise empty intersection with every $R_{\beta} \in \vec{R}$. 
Note that for any $\beta < \omega_1$, membership in $R_{\beta}$ is uniformly $\Sigma_1$-definable over the model $L_{\omega_1}$, i.e. there is a $\Sigma_1$-formula $\psi(x,y)$ such that for every $\beta < \omega_1$
$\alpha \in R_{\beta} \Leftrightarrow L_{\omega_1} \models \psi(\alpha, \beta)$.

We proceed with adding $\aleph_1$-many Suslin trees using of Jech's Forcing $ \forceP_J$. We let 
\[\forceQ^0 = \prod_{\beta \in \omega_1} \forceP_J \] using countable support. This is a $\sigma$-closed, hence proper notion of forcing. We denote the generic filter of $\forceQ^0$ with $\vec{S}=(S_{\alpha} \, : \, \alpha < \omega_1)$ and note that by Lemma \ref{ManySuslinTrees} $\vec{S}$ is independent.  We fix a definable bijection between $[\omega_1]^{\omega}$ and $\omega_1$ and identify the trees in $(S_{\alpha }\, : \, \alpha < \omega_1)$ with their images under this bijection, so the trees will always be subsets of $\omega_1$ from now on.

We work in $L[\forceQ^0]$ and will define the second block of forcings as follows: we let 
\[\forceQ^1= \prod_{\beta < \omega_1} S_{\beta} \]
in other words, we add to each generically created tree from $\vec{S}$ an $\omega_1$-branch, via forcing with the tree. Note that by the argument from the proof of lemma \ref{1 preservation of Suslin trees}, this forcing has a dense subset which is $\sigma$-closed. Hence $L[\forceQ^0][\forceQ^1]$ is a proper and $\omega$-distributive generic extension of $L$.

In a third step we code the trees from $\vec{S}$ into the sequence of $L$-stationary subsets $\vec{R}$ we produced earlier, using Lemma \ref{coding with stationary sets}. It is important to note, that the forcing we are about to define does preserve Suslin trees, a fact we will show later.
The forcing used in the third step will be denoted by $\mathbb{Q}^2$ and will itself be a countable support iteration of length $\omega_1 \cdot \omega_1$ whose components are countable support iteration themselves. First, fix a definable bijection $h \in L_{\omega_2}$ between $\omega_1 \times \omega_1$ and $\omega_1$ and write $\vec{R}$ from now on in ordertype $\omega_1 \cdot \omega_1$ making implicit use of $h$, so we assume that $\vec{R}= (R_{\alpha} \, : \, \alpha < \omega_1 \cdot \omega_1)$. We let $\alpha< \omega_1$ and consider the tree $S_{\alpha} \subset \omega_1$. Defining the $\alpha$-th factor of our iteration $\forceQ^2$, we let $\forceQ^2 (\alpha)$ be the countable support iteration which codes the characteristic function of $S_{\alpha}$ into the $\alpha$-th $\omega_1$-block of the $R_{\beta}$'s just as in Lemma \ref{coding with stationary sets}. So $\forceQ^2 (\alpha)$ is a countable support iteration whose factors, denoted by $\forceQ^2(\alpha) (\gamma)$ are defined via
\[ \forall \gamma < \omega_1 \,(\forceQ (\alpha) (\gamma)= \dot{\forceP}_{\omega_1 \backslash R_{\omega_1 \cdot \alpha + 2 \gamma +1}}) \text{ if } S_{\alpha} (\gamma) =0 \]
and
\[ \forall \gamma < \omega_1 \, (\forceQ( \alpha) (\gamma)= \dot{\forceP}_{\omega_1 \backslash R_{\omega_1 \cdot \alpha + 2 \gamma}}) \text{ if } S_{\alpha} (\gamma) =1. \]

Recall that we let $R$ be a stationary, co-stationary subset of $\omega_1$ which is disjoint from all the $R_{\alpha}$'s which are used. It follows from Lemma \ref{coding with stationary sets} that for every $\alpha < \omega_1$, $\forceQ^2 (\alpha)$ is an $R$-proper forcing which additionally is $\omega$-distributive.  Then we let $\mathbb{Q}^2$ be the countably supported iteration, $$\mathbb{Q}^2=\Asterisk_{\alpha< \omega_1} \forceQ^2 (\alpha)$$ which is again $R$-proper (and $\omega$-distributive as we shall see later).
This way we can turn the generically added sequence of trees $\vec{S}$ into a definable sequence of trees.
Indeed, if we work in $L[\vec{S}\ast \vec{b} \ast G]$, where $\vec{S} \ast\vec{b} \ast  G$ is $\forceQ^0 \ast \mathbb{Q}^1 \ast \forceQ^2$-generic over $L$, then, as seen in Lemma \ref{coding with stationary sets} 
\begin{align*}
\forall \alpha, \gamma < \omega_1 (&\gamma \in S_{\alpha} \Leftrightarrow R_{\omega_1 \cdot \alpha + 2 \cdot \gamma} \text{ is not stationary and} \\ &
\gamma \notin S_{\alpha} \Leftrightarrow  R_{\omega_1 \cdot \alpha + 2 \cdot \gamma +1} \text{ is not stationary})
\end{align*}
Note here that the above formula can be written in a $\Sigma_1(\{\omega_1\} )$-way (i.e. it can be written as a $\Sigma_1$ formula with the ordinal $\omega_1$ as the only parameter), as  it reflects down to $\aleph_1$-sized, transitive models of $\ZFP$ which contain a club through exactly one element of every pair $\{(R_{\alpha}, R_{\alpha+1}) \, : \, \alpha < \omega_1\}$.

Our goal is to use $\vec{S}$ for coding. For this it is essential, that the sequence remains independent in the inner  universe $L[\forceQ^0 \ast \forceQ^2]$. Note that this is reasonable as $\forceQ^0 \ast \forceQ^1 \ast \forceQ^2$ can be written as $\forceQ^0 \ast ( \forceQ^1 \times \forceQ^2)$, hence one can form the inner model $L[\forceQ^0 \ast \forceQ^2]$ without problems.

The following  line of reasoning is similar to \cite{Ho}.
Recall that for a forcing $\forceP$ and $M \prec H(\theta)$, a condition $q \in \forceP$ is $(M,\forceP)$-generic iff for every maximal antichain $A \subset \forceP$, $A \in M$, it is true that $ A \cap M$ is predense below $q$.
The key fact is the following (see \cite{Miyamoto2} for the case where $\forceP$ is proper)
\begin{lemma}\label{preservation of Suslin trees}
 Let $T$ be a Suslin tree, $R \subset \omega_1$ stationary and $\forceP$ an $R$-proper
 poset. Let $\theta$ be a sufficiently large cardinal.
 Then the following are equivalent:
 \begin{enumerate}
  \item $\Vdash_{\forceP} T$ is Suslin
 
  \item if $M \prec H_{\theta}$ is countable, $\eta = M \cap \omega_1 \in R$, and $\forceP$ and $T$ are in $M$,
  further if $p \in \forceP \cap M$, then there is a condition $q<p$ such that 
  for every condition $t \in T_{\eta}$, 
  $(q,t)$ is $(M, \forceP \times T)$-generic.
 \end{enumerate}

\end{lemma}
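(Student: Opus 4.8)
The plan is to prove the two implications separately; both follow the pattern of Miyamoto's characterization for proper $\forceP$ (see \cite{Miyamoto2}), with $R$-properness inserted wherever one needs a generic condition. Throughout fix $\theta$ large enough that $\forceP, T \in H_\theta$ and $H_\theta$ computes the relevant forcing relations, and recall that an $R$-proper forcing is in particular proper, hence $\omega_1$-preserving, so $T$ stays a normal tree of height $\omega_1$ in every $\forceP$-extension.

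\emph{(1) $\Rightarrow$ (2).} Let $M \prec H_\theta$ be countable with $\eta = M\cap\omega_1 \in R$ and $\forceP, T \in M$, and let $p \in \forceP\cap M$. Since $\forceP$ is $R$-proper and $\eta\in R$, fix $q \le p$ that is $(M,\forceP)$-generic; I claim the same $q$ works in (2). In any $\forceP$-extension $V[G]$ with $q\in G$ we have $M[G]\prec H_\theta^{V[G]}$, and by (1) the tree $T$ is Suslin, hence ccc, in $V[G]$. But any maximal antichain of a ccc poset that belongs to a countable elementary submodel is contained in that submodel (it is countable and the submodel contains an enumeration of it), so it equals its own intersection with the submodel and is therefore predense below \emph{every} condition; consequently every node of $T$ — in particular every $t \in T_\eta$, a level unchanged by the forcing — is $(M[G],T)$-generic in $V[G]$. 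Thus $q$ forces ``$\check t$ is $(M[\dot G],\check T)$-generic'' for each $t \in T_\eta$, and the standard genericity lemma for two-step iterations — if $q$ is $(M,\forceP)$-generic and $q$ forces $\dot r$ to be $(M[\dot G],\dot\forceQ)$-generic, then $(q,\dot r)$ is $(M,\forceP\ast\dot\forceQ)$-generic — applied with the product $\forceP\times T = \forceP\ast\check T$ shows that $(q,t)$ is $(M,\forceP\times T)$-generic for every $t \in T_\eta$.

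\emph{(2) $\Rightarrow$ (1).} Since $\forceP$ preserves $\omega_1$, the statement ``$\Vdash_\forceP T$ is Suslin'' fails exactly when some $p\in\forceP$ forces that $T$ has a maximal antichain of size $\aleph_1$; assume this, witnessed by $p$ and a name $\dot A$, and (modifying $\dot A$ off $p$) assume $\Vdash_\forceP$``$\dot A$ is a maximal antichain of $T$''. As $R$ is stationary there is a countable $M\prec H_\theta$ with $\forceP, T, p, \dot A \in M$ and $\eta := M\cap\omega_1\in R$; let $q\le p$ be as given by (2). Let $\dot D$ name the upward closure $\{s\in T : \exists a\in\dot A\ \, a\le_T s\}$ of $\dot A$; the upward closure of a maximal antichain is dense open in $T$, so $W := \{(p',s)\in\forceP\times T : p'\Vdash \check s\in\dot D\}$ is predense in $\forceP\times T$, and $W\in M$ as it is definable from $\dot A, T\in M$. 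Fix $t\in T_\eta$. Given any $q'\le q$, genericity of $(q,t)$ applied to $W$ yields $(p',s)\in W\cap M$ compatible with $(q',t)$; here $s\in T\cap M = T_{<\eta}$, and a common extension $(q'',t'')\le(p',s),(q',t)$ places both $s$ and $t$ below $t''$ in the tree order, hence $s<_T t$, while $q''\le p'$ forces $\check s\in\dot D$ and therefore, by upward closure and $s<_T t$, $q''\Vdash\check t\in\dot D$. Thus the conditions forcing $\check t\in\dot D$ are dense below $q$, so $q\Vdash\check t\in\dot D$; as $t\in T_\eta$ was arbitrary, $q\Vdash T_\eta\subseteq\dot D$. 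But if a maximal antichain $A$ satisfies $T_\alpha\subseteq D_A$ for some $\alpha<\omega_1$, then $A\subseteq T_{\le\alpha}$ is countable (any $a\in A$ with $\mathrm{lev}(a)>\alpha$ lies above $a\restriction\alpha\in T_\alpha$, which lies above some $a'\in A$, forcing $a=a'$, a contradiction); hence $p$ forces $T_\eta\not\subseteq\dot D$, and $q\le p$ forces a contradiction.

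The step that needs care is (2)$\Rightarrow$(1): one must express the failure of Suslinity through the dense-open cone $\dot D$ above an uncountable maximal antichain rather than through the antichain itself — otherwise the compatibility bookkeeping inside $\forceP\times T$ breaks down, since comparability in a tree can run in either direction — and then one must upgrade the conclusion obtained from genericity, which a priori holds only densely below $q$, to a statement forced by $q$ outright (legitimate because ``$\check t\in\dot D$'' is decided by each condition). The remaining ingredients — stationarity of $\{M\in[H_\theta]^\omega : M\cap\omega_1\in R\}$, $\omega_1$-preservation, the fact that every condition of a ccc poset is generic over a countable submodel, and the two-step genericity lemma — are routine.
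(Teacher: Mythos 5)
Your proof is correct and follows essentially the same route as the paper's: for (1)$\Rightarrow$(2) you take an $(M,\forceP)$-generic $q\le p$, note that ccc-ness of $T$ in the extension makes every node $(M[\dot G],T)$-generic, and apply the two-step genericity lemma to the product; for (2)$\Rightarrow$(1) you code the maximal antichain into a predense subset of $\forceP\times T$ lying in $M$ and use the generic pair $(q,t)$ to trap the antichain inside $T\restriction\eta$, hence force it to be countable. Two small corrections to side remarks. First, your opening claim that an $R$-proper forcing is in particular proper is false: the club-shooting forcing $\forceP_R$ is $R$-proper but destroys the stationarity of $\omega_1\setminus R$, so it is not proper. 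The fact you actually need, preservation of $\omega_1$, does hold, but it follows directly from $R$-properness together with the stationarity of $R$: given $p\Vdash \dot f:\omega\to\check\omega_1$, choose a countable $M\prec H_\theta$ with $M\cap\omega_1\in R$ containing all relevant parameters and an $(M,\forceP)$-generic $q\le p$; then $q$ forces $\mathrm{ran}(\dot f)\subseteq M\cap\omega_1<\omega_1$. Second, your closing comment that one \emph{must} pass to the dense open cone $\dot D$ above the antichain is not accurate: the paper works directly with $B=\{(x,s): x\Vdash_{\forceP} \check s\in\dot A\}$, and the compatibility bookkeeping still goes through, because any $s$ produced by genericity lies in $M$ and hence has level $<\eta$, so its comparability with $t\in T_\eta$ can only run one way, namely $s<_T t$. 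Your detour through $\dot D$ is harmless, just not necessary.
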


\begin{proof}
For the direction from 1 to 2 note first that $\Vdash_{\forceP} ``T$ is Suslin$"$ implies $\Vdash_{\forceP} ``T$ is ccc$"$, and in particular for any countable elementary submodel $N[\dot{G}_{\forceP}] \prec H(\theta)^{V[\dot{G}_{\forceP}]}$,  $\Vdash_{\forceP} \forall t \in T \, (t$ is $(N[\dot{G}_{\forceP}],T)$-generic). Now if $M \prec H(\theta)$ and $M \cap \omega_1 = \eta \in R$ and $\forceP,T \in M$ and $p \in \forceP \cap M$ then there is a $q<p$ such $q$ is $(M,\forceP)$-generic. So $q \Vdash \forall t \in T \, (t$ is $(M[\dot{G}_{\forceP}], T)$-generic, and this in particular implies that $(q,t)$ is $(M, \forceP \times T)$-generic for all $t \in T_{\eta}$. 

For the direction from 2 to 1 assume that $\Vdash \dot{A} \subset T$ is a maximal antichain. Let $B=\{(x,s) \in \forceP \times T \, : \, x \Vdash_{\forceP} \check{s} \in \dot{A} \}$, then $B$ is a predense subset in $\forceP \times T$. Let $\theta$ be a sufficiently large regular cardinal and let $M \prec H(\theta)$ be countable such that $M \cap \omega_1=\eta \in R$ and $\forceP, B,p,T \in M$. By our assumption there is a $q <_{\forceP} p$  such that $\forall t \in T_{\eta} \, ((q,t)$ is $(M, \forceP \times T)$-generic). So $B \cap M$ is predense below $(q,t)$ for every $t \in T_{\eta}$, which yields that $q \Vdash_{\forceP} \forall t \in T_{\eta}\,  \exists s<_{T} t \, (s \in \dot{A})$ and $s$ can be found in $M$, hence $q \Vdash \dot{A} \subset T \upharpoonright \eta$, so $\Vdash_{\forceP} T$ is Suslin.
\end{proof}
In a similar way, one can show that Theorem 1.3 of \cite{Miyamoto2} holds true if we replace proper by $R$-proper for $R \subset \omega_1$ a stationary subset.
\begin{theorem}
Let $(\forceP_{\alpha})_{\alpha < \eta}$ be a countable support iteration of length $\eta$, let $R \subset \omega_1$ be stationary and suppose that for every $\alpha < \eta$, for the $\alpha$-th factor of the iteration $\dot{\forceP}(\alpha)$ it holds that $\Vdash_{\alpha}  ``\dot{\forceP}(\alpha)$ is $R$-proper and 
preserves every Suslin tree.$"$ Then $\forceP_{\eta}$ is $R$-proper and preserves every Suslin tree.
\end{theorem}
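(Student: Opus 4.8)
The plan is to get $R$-properness of $\forceP_\eta$ for free from the Fact recorded above on countable support iterations of $R$-proper forcings, and then to prove the preservation of Suslin trees by induction on the length $\eta$, using the criterion of Lemma~\ref{preservation of Suslin trees} as the inductive invariant. So fix a Suslin tree $T$ of the ground model; to show $\Vdash_{\forceP_\eta}T$ is Suslin it suffices, by Lemma~\ref{preservation of Suslin trees}, to fix a countable $M\prec H(\theta)$ with $\nu:=M\cap\omega_1\in R$ (I write $\nu$ for $M\cap\omega_1$ to keep it apart from the iteration length) and $\forceP_\eta,T\in M$, a condition $p\in\forceP_\eta\cap M$, and to produce $q\le p$ with $(q,t)$ being $(M,\forceP_\eta\times T)$-generic for every $t\in T_\nu$. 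Exactly as in the proof of Lemma~\ref{preservation of Suslin trees}, this last requirement is equivalent to: $q$ is $(M,\forceP_\eta)$-generic and $q$ forces both that $T$ is still Suslin and that every $t\in T_\nu$ is $(M[\dot G_{\forceP_\eta}],T)$-generic; it is this equivalent form that I would carry through the induction.

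For $\eta=\xi+1$ a successor: the induction hypothesis gives $q_\xi\le p\restriction\xi$ witnessing the statement for $\forceP_\xi$, so $q_\xi$ forces $M[\dot G_{\forceP_\xi}]\prec H(\theta)^{V[\dot G_{\forceP_\xi}]}$ with $M[\dot G_{\forceP_\xi}]\cap\omega_1=\nu\in R$ and $T$ still Suslin; since $\Vdash_\xi$ ``$\dot\forceP(\xi)$ is $R$-proper and preserves Suslin trees'', Lemma~\ref{preservation of Suslin trees} applied inside $V[\dot G_{\forceP_\xi}]$ yields a $\forceP_\xi$-name $\dot q(\xi)$ for a condition below $p(\xi)$ such that $(\dot q(\xi),t)$ is $(M[\dot G_{\forceP_\xi}],\dot\forceP(\xi)\times T)$-generic for all $t\in T_\nu$. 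Using the canonical regrouping $(\forceP_\xi\ast\dot\forceP(\xi))\times T\cong\forceP_\xi\ast(\dot\forceP(\xi)\times\check T)$ (valid because $T\in V$) together with the two-step composition of generic conditions, $q:=q_\xi{}^\frown\dot q(\xi)$ is as desired. For $\eta$ a limit of uncountable cofinality the argument is routine: since $M$ is countable, $\eta^\ast:=\sup(M\cap\eta)<\eta$, and by countable support every condition and every (pre)dense subset of $\forceP_\eta$ or of $\forceP_\eta\times T$ lying in $M$ already lives at level $\eta^\ast$, so the induction hypothesis at $\eta^\ast$ applies.

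The substantive case is $\mathrm{cf}(\eta)=\omega$, where one runs the standard fusion argument for countable support iterations with the tree $T$ carried along as a side coordinate. Pick $\langle\eta_n:n\in\omega\rangle\in M$ increasing and cofinal in $\eta$, and fix enumerations $\langle D_n:n\in\omega\rangle$ of the dense subsets of $\forceP_\eta$ that belong to $M$ and $\langle B_n:n\in\omega\rangle$ of the predense subsets of $\forceP_\eta\times T$ that belong to $M$. Build a decreasing sequence $\langle p_n:n\in\omega\rangle$ in $\forceP_\eta\cap M$ with $p_0=p$ and $p_{n+1}\restriction\eta_n=p_n\restriction\eta_n$, arranging at step $n$ --- using the induction hypothesis for $\forceP_{\eta_n}$ (whose length $\eta_n$ is genuinely smaller than $\eta$) to secure an $(M,\forceP_{\eta_n})$-generic, $T$-preserving extension of $p_n\restriction\eta_n$, then working below it in $V[\dot G_{\eta_n}]$, where $T$ is still Suslin and $M[\dot G_{\eta_n}]\cap\omega_1=\nu\in R$, together with elementarity of $M$ --- that $p_{n+1}$ reduces $D_n$ into $M$ and that $B_n\cap M$ is predense below $(p_{n+1},t)$ for every $t\in T_\nu$. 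The role of Lemma~\ref{preservation of Suslin trees} at each step is precisely that it produces the needed condition uniformly for all $t\in T_\nu$ at once rather than one node at a time. The amalgam $q$ given by $q\restriction\eta_n=p_n\restriction\eta_n$ for all $n$ has countable support, hence lies in $\forceP_\eta$; one checks in the usual way that $q\le p_n$ for every $n$, that $q$ is $(M,\forceP_\eta)$-generic, and (since predensity below a condition passes to any stronger condition) that $B_n\cap M$ remains predense below $(q,t)$ for every $n$ and every $t\in T_\nu$. As every predense $B\subseteq\forceP_\eta\times T$ with $B\in M$ occurs among the $B_n$, this yields that $(q,t)$ is $(M,\forceP_\eta\times T)$-generic for all $t\in T_\nu$, which is what was wanted.

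I expect the only genuine work to be the bookkeeping in this $\mathrm{cf}(\eta)=\omega$ case, and that it is entirely parallel to the proper case treated in \cite{Miyamoto2} (Theorem~1.3): the fixed parameter $\nu=M\cap\omega_1$ stays in $R$ throughout the construction, every ``$(M,\cdot)$-generic condition'' we invoke is the $R$-proper one supplied by the induction hypothesis or by the Fact above, and the amalgamation at the limit is justified exactly as for $R$-proper countable support iterations. In other words, there should be no new combinatorics beyond substituting ``$R$-proper'' for ``proper'' and tracking the $T$-coordinate via Lemma~\ref{preservation of Suslin trees}; a complete proof would simply reproduce Miyamoto's fusion with these changes.
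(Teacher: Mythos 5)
Your overall strategy is exactly what the paper has in mind: the paper does not actually prove this theorem, it only remarks that Miyamoto's Theorem~1.3 from \cite{Miyamoto2} survives the replacement of ``proper'' by ``$R$-proper'', and your induction with the criterion of Lemma~\ref{preservation of Suslin trees} as the invariant, the two-step argument at successors, and the fusion at limits of cofinality $\omega$ is precisely that adaptation.

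There is, however, a genuine gap in your limit case of uncountable cofinality. You claim that every (pre)dense subset of $\forceP_\eta$ or of $\forceP_\eta\times T$ lying in $M$ ``already lives at level $\eta^*=\sup(M\cap\eta)$'' and that the induction hypothesis at $\eta^*$ therefore applies. Neither half is true: a predense $B\in M$ is not a subset of $\forceP_{\eta^*}$ (only its members that lie in $M$ have support below $\eta^*$), and its predensity does not transfer to $\forceP_{\eta^*}$; more seriously, for limit $\eta$ one always has $\eta^*\notin M$ (as $M\cap\eta$ has no maximum), hence $\forceP_{\eta^*}\notin M$, so the induction hypothesis --- which needs a countable $M$ containing the iteration it speaks about --- cannot be invoked for $\forceP_{\eta^*}$ with this $M$ at all. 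The standard repair, and what Miyamoto actually does, is to treat \emph{all} limit stages by the fusion you describe for $\mathrm{cf}(\eta)=\omega$: pick $\eta_n\in M\cap\eta$ increasing with $\sup_n\eta_n=\sup(M\cap\eta)$ (cofinal in $M\cap\eta$, not in $\eta$), run the fusion along these, and let the resulting $q$ be trivial on $[\sup(M\cap\eta),\eta)$; meeting $D_n\cap M$ and securing predensity of $B_n\cap M$ below $(q,t)$ then works at the finite stages exactly as in your $\omega$-cofinal case. A second, smaller point: in that fusion the $p_{n+1}$ cannot literally be ground-model conditions in $\forceP_\eta\cap M$, since which element of $D_n\cap M$ one extends into depends on the generic below $\eta_n$; one has to work with $\forceP_{\eta_n}$-names for conditions of $M[\dot G_{\eta_n}]$, with the real condition $q\upharpoonright\eta_n$ forcing the relevant properties --- this is the name bookkeeping of Miyamoto's proof that your ``one checks in the usual way'' elides. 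With these two corrections your argument becomes the standard one and is correct.
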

So in order to argue that our forcing $\forceQ^2$ preserves Suslin trees when used over the ground model $W[\forceQ^0]$, it is sufficient to show that every factor preserves Suslin trees.
This is indeed the case.
\begin{lemma}
Let $R \subset \omega_1$ be stationary, co-stationary, then the club shooting forcing $\forceP_R$ preserves Suslin trees.
\end{lemma}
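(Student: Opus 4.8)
The plan is to derive this from the equivalence in Lemma~\ref{preservation of Suslin trees}, applied with $\forceP := \forceP_R$ (which is $R$-proper, so the lemma applies): it is enough to verify clause~(2). So I would fix a Suslin tree $T$, a sufficiently large regular $\theta$, a countable $M \prec H_\theta$ with $\forceP_R, T \in M$ and $\eta := M \cap \omega_1 \in R$, and a condition $p \in \forceP_R \cap M$, and then construct $q <_{\forceP_R} p$ such that $(q,t)$ is $(M,\forceP_R\times T)$-generic for \emph{every} $t \in T_\eta$.

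The construction would rest on two remarks. First, because $T$ is Suslin its levels are countable, so already in $V$ one may fix an enumeration $T_\eta = \{ t_j : j < \omega \}$; none of the $t_j$ belong to $M$, but $q$ is built in $V$ and may refer to them, so each $t_j$ can be treated as a separate task. Secondly, for a dense open $D \subseteq \forceP_R \times T$ with $D \in M$ and a condition $p' \in \forceP_R \cap M$, the set $E := \{ s \in T : (\exists p'' \le_{\forceP_R} p')\,(p'',s) \in D \}$ is predense in $T$ (by density of $D$), lies in $M$, and its set of $<_T$-minimal elements $E^{\min}$ is a \emph{maximal antichain} of $T$. As a maximal antichain of a Suslin tree that happens to lie in $M$, $E^{\min}$ is countable, hence $E^{\min} \subseteq M$, hence all of its nodes have level below $\eta$. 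Consequently, for any $t \in T_\eta$ there is $s_0 \in E^{\min}$ with $s_0 <_T t$; by elementarity there is $p'' \in M$ with $p'' \le_{\forceP_R} p'$ and $(p'', s_0) \in D$; and since $D$ is open while $t\restriction\zeta \in T_\zeta \subseteq M$ and $t\restriction\zeta \ge_T s_0$ for all $\zeta$ with $\operatorname{lev}(s_0)\le\zeta<\eta$, one gets $(p'', t\restriction\zeta) \in D \cap M$ with $p'' \le_{\forceP_R} p'$.

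With these in hand I would enumerate $M \cap \{\,\text{dense open subsets of } \forceP_R \times T\,\} = \{ D_n : n<\omega\}$ and enumerate $\omega\times\omega$ in order type $\omega$, and recursively build a descending chain $p = p_0 \ge p_1 \ge \cdots$ of conditions in $\forceP_R \cap M$: at the stage whose task is $(j,n)$, apply the second remark with $D := D_n$, $p' := p_m$, $t := t_j$ to get $p'' \in M$, $p'' \le p_m$, and $\zeta<\eta$ with $(p'', t_j\restriction\zeta) \in D_n \cap M$, and let $p_{m+1}\in M$ be an end-extension of $p''$ chosen so that the maxima of the ranges of the $p_m$ converge to $\eta$ (possible since $R\cap M$ is cofinal in $\eta$). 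Then, exactly as in the proof that $\forceP_R$ is $\omega$-distributive, $q := (\bigcup_m p_m) \cup \{(\delta,\eta)\}$ with $\delta := \operatorname{dom}(\bigcup_m p_m)$ is a genuine condition of $\forceP_R$ (here $\eta\in R$ is used), and $q \le p_m$ for all $m$, in particular $q \le p$.

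It then remains to see that $(q, t_j)$ is $(M,\forceP_R\times T)$-generic for each $j$: given a maximal antichain $A\in M$ of $\forceP_R\times T$, its downward closure $D_A$ is dense open and lies in $M$, say $D_A = D_n$; at the stage handling $(j,n)$ we secured $(p'', t_j\restriction\zeta) \in D_n\cap M$ with $q\le p_{m+1}\le p''$, so $(q,t_j) \le (p'',t_j\restriction\zeta) \le (a,b)$ for some $(a,b)\in A\cap M$, and hence $A\cap M$ is predense below $(q,t_j)$. Lemma~\ref{preservation of Suslin trees} then yields $\Vdash_{\forceP_R} T$ is Suslin. I expect the main obstacle to be precisely the demand that a \emph{single} $q$ handle all of $T_\eta$ at once; this is what forces the two remarks above — the countability of $T_\eta$ (so that every $t_j$ becomes one task in an $\omega$-length recursion) and the observation that the relevant minimal node of $D$ above $t_j$ can always be found below level $\eta$ and inside $M$.
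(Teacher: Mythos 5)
Your proof is correct and follows essentially the same route as the paper's: reduce to clause (2) of Lemma \ref{preservation of Suslin trees}, then run an $\omega$-length fusion over an $\omega\times\omega$ grid of tasks indexed by the (countably many) nodes of $T_\eta$ and the relevant dense sets, using $\eta\in R$ to close off the chain. The only difference is presentational: where the paper handles the $T$-coordinate by noting that each $t\in T_\eta$ is $(M,T)$-generic and then meeting the $\forceP_R$-dense sets of the extensions $M[t_i]$, you work directly with dense open subsets of the product via the (correct) observation that the minimal elements of the $T$-trace of such a set form a countable maximal antichain contained in $M$ — the same use of Suslinity in an equivalent guise.
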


\begin{proof}
Because of Lemma \ref{preservation of Suslin trees}, it is enough to show that for for any Suslin tree $T$, any regular and sufficiently large
 $\theta$, every $M \prec H_{\theta}$ with $M \cap \omega_1 = \eta \in R$, and every
 $p \in \forceP_R \cap M$ there is a $q<p$ such that for every
 $t \in T_{\eta}$, $(q,t)$ is $(M,(\forceP_R \times T))$-generic.
 Note first that as $T$ is Suslin, every node $t \in T_{\eta}$ is an
 $(M,T)$-generic condition. Further, as forcing with a Suslin tree
 is $\omega$-distributive, $M[t]$ has the same $M[t]$-countable sets as $M$.
 By the argument of the proof of Lemma 2.2, if $M\prec H(\theta)$ is such
 that $M \cap \omega_1 \in R$ then an $\omega$-length descending sequence
 of $\forceP_R$-conditions in $M$ whose domains converge to $M \cap \omega_1$
 has a lower bound as $M \cap \omega_1 \in R$.
 
 We construct an $\omega$-sequence of elements of $\forceP_R$ which has a lower bound
 which will be the desired condition. 
 We list the nodes on $T_{\eta}$, $(t_i \, : \, i \in \omega)$ and
 consider the according generic extensions $M[t_i]$.
 In every $M[t_i]$ we list the $\forceP_R$-dense subsets of $M[t_i]$,
 $(D^{t_i}_n \, : \, n \in \omega)$ and write 
 the so listed dense subsets of $M[t_i]$ as an $\omega \times \omega$-matrix and enumerate
 this matrix in an $\omega$-length sequence of dense sets $(D_i \, : \, i \in \omega)$.
 If $p=p_0 \in \forceP_R \cap M$ is arbitrary we can find, using the fact that $\forall i \, (\forceP_R \cap M[t_i] = M \cap \forceP_R$), an $\omega$-length, descending
 sequence of conditions below $p_0$ in $\forceP_R \cap M$, $(p_i \, : \, i \in \omega)$
 such that $p_{i+1} \in M \cap \forceP_R$ is in $D_i$.
 We can also demand that the domain of the conditions $p_i$ converge to $M \cap \omega_1$.
 Then the $(p_i)$'s have a lower bound $p_{\omega} \in \forceP_R$ and $(t, p_{\omega})$ is an
 $(M, T \times \forceP_R)$-generic conditions for every $t \in T_{\eta}$ as any $t \in T_{\eta}$ is $(M,T)$-generic
 and every such $t$ forces that $p_{\omega}$ is $(M[T], \forceP_R)$-generic; moreover $p_{\omega} < p$ as 
 desired.
 \end{proof}
 
Let us set $W:= L[\forceQ^0\ast \forceQ^1  \ast \forceQ^2 ]$ which will serve as our ground model for a second iteration of length $\omega_1$. To summarize the above:

\begin{theorem}
The universe $W= L[\forceQ^0 \ast \forceQ^1 \ast \forceQ^2]$ is an $\omega$-distributive generic extension of $L$, in particular no new reals are added and $\omega_1$ is preserved. In $W$ there is a $\Sigma_1(\{\omega_1\})$-definable, independent sequence of  trees $\vec{S}$ which are Suslin in the inner model $L[\forceQ^0][\forceQ^2]$, yet no tree is Suslin in $W$. 
\end{theorem}
\begin{proof}
The first assertion should be clear from the above discussion. The second assertion holds by the following standard argument. As $\forceQ^0 \ast \forceQ^1$ does not add any reals it is sufficient to show that $\forceQ^2$ is $\omega$-distributive in $L[\forceQ^0][\forceQ^1]$.
Let $p \in \forceQ^2$ be a condition and assume that $p \Vdash ``\dot{r} $ is a countable sequence of ordinals$"$. We shall find a stronger $q < p$ and a set $r$ in the ground model such that $q \Vdash \check{r}=\dot{r}$. Let $M \prec H(\omega_3)$ be a countable elementary submodel which contains $p, \forceQ^2$ and $\dot{r}$ and such that $M \cap \omega_1 \in R$, where $R$ is our fixed stationary set from above. Inside $M$ we recursively construct a decreasing sequence $p_n$ of conditions in $\forceQ^2$, such that for every $n$ in $\omega,$ $p_n \in M$, $p_n$ decides $\dot{r}(n)$ and for every $\alpha$ in the support of $p_n$, the sequence $\operatorname{sup}_{n \in \omega} \operatorname{max}( p_n(\alpha))$ converges towards $M \cap \omega_1$ which is in $R$. Now, $q':= \bigcup_{n \in \omega} p_n$ and for every $\alpha< \omega_1$ such that $q'(\alpha)\ne 1$ (where 1 is the weakest condition of the forcing),  in other words for every $\alpha$ in the support of $q'$ we define $q(\alpha):= q'(\alpha) \cup \{(\omega,sup (M \cap \omega_1))\}$ and $q(\alpha)=1$ otherwise. Then $q=(q(\alpha))_{\alpha < \omega_1}$ is a condition in $\forceQ^2$, as can be readily verified and $q \Vdash \dot{r} = \check{r}$, as desired.
\end{proof}
The independent sequence $\vec{S}$ will be split into two $\Sigma_1(\{\omega_1\})$-definable sequences via letting
\[\vec{S}^1= ( S_{\alpha} \in \vec{S} \, : \, \alpha \text{ is even}) \] and
\[ \vec{S}^2= (  S_{\alpha} \in \vec{S} \, : \, \alpha \text{ is odd}). \]
These two sequences will be used for defining the $\Pi^1_3$-sets witnessing the reduction property, as we will see soon.

We end with a straightforward lemma which is used later in coding arguments.

\begin{lemma}\label{a.d.coding preserves Suslin trees}
 Let $T$ be a Suslin tree and let $\mathbb{A}_D(X)$ be the almost disjoint coding which codes
 a subset $X$ of $\omega_1$ into a real with the help of an almost disjoint family
 of reals $D$ of size $\aleph_1$. Then $$\mathbb{A}_{D}(X) \Vdash_{} T \text{ is Suslin }$$
 holds.
\end{lemma}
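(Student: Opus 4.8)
The plan is to prove the stronger, purely combinatorial fact that $\mathbb{A}_D(X)$ is $\sigma$-centered, and then to invoke the standard observation that a $\sigma$-centered (indeed Knaster) forcing cannot kill a Suslin tree.

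First I would observe that any two conditions $(r,R),(r,S)\in\mathbb{A}_D(X)$ which share the same first coordinate $r\in\omega^{<\omega}$ are compatible, with common extension $(r,R\cup S)$: clause (1) of the ordering is immediate, and clause (2) is vacuously satisfied because the finite part $r$ is not changed, so $r\cap d_\alpha=r\cap d_\alpha$. Since there are only countably many possible stems $r\in\omega^{<\omega}$, this writes $\mathbb{A}_D(X)$ as a countable union of centered subsets, i.e. $\mathbb{A}_D(X)$ is $\sigma$-centered; in particular it has the Knaster property and is productively ccc.

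Next I would use that $T$, regarded as a notion of forcing whose conditions are its nodes, is ccc precisely because it is Suslin, and that a Knaster forcing times a ccc forcing is ccc: given a putative uncountable antichain in $\mathbb{A}_D(X)\times T$, thin it out by the Knaster property to an uncountable subset on which the first coordinates are pairwise compatible; then the corresponding second coordinates form an uncountable antichain in $T$, contradicting ccc-ness of $T$. Hence $\mathbb{A}_D(X)\times T$ is ccc, which is exactly the statement $\mathbb{A}_D(X)\Vdash$ ``$T$ is ccc''. Finally I would convert ``ccc'' back into ``Suslin'' in the extension: since $\mathbb{A}_D(X)$ is ccc it preserves $\omega_1$, so $T$ still has height $\omega_1$ and remains a normal, everywhere-splitting, countably-levelled tree (these properties are preserved); and for such a tree the absence of an uncountable antichain already rules out an uncountable branch, via the usual argument of choosing, along a hypothetical cofinal branch, one immediate successor off the branch at each level and noting that these nodes form an uncountable antichain. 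Therefore $\mathbb{A}_D(X)\Vdash$ ``$T$ is a Suslin tree''. (Alternatively the same conclusion follows from Lemma \ref{preservation of Suslin trees}: $\mathbb{A}_D(X)$, being ccc, is $R$-proper for every stationary $R$, and since $\mathbb{A}_D(X)\times T$ is ccc, every pair $(q,t)$ is automatically $(M,\mathbb{A}_D(X)\times T)$-generic, so condition (2) there holds trivially.)

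I do not expect a genuine obstacle here: the substance is the $\sigma$-centeredness computation together with the Knaster-times-ccc bookkeeping. The only point needing a little care is the last step, namely that ``ccc as a forcing notion'' really does upgrade to ``Suslin'' for $T$ in the generic extension, which is exactly why one must note that normality of $T$ survives the forcing.
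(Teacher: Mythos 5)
Your proposal is correct and follows essentially the same route as the paper, which simply notes that $\mathbb{A}_D(X)$ has the Knaster property, so $\mathbb{A}_D(X)\times T$ is ccc and hence $T$ stays Suslin in the extension. You merely spell out the details the paper leaves implicit (the $\sigma$-centeredness via common stems, and the upgrade from ccc-in-the-extension back to Suslin using normality), so there is nothing to object to.
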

\begin{proof}
 This is clear as $\mathbb{A}_{D}(X)$ has the Knaster property, thus the product $\mathbb{A}_{D}(X) \times T$ is ccc and $T$ must be Suslin in $V[{\mathbb{A}_{D}(X)}]$. 
\end{proof}

\section{Main Proof}
\subsection{Informal discussion of the idea}
We proceed with an informal discussion of the main ideas of the proof. We focus on reducing one fixed, arbitrary pair $A_m$ and $A_k$ of $\Pi^1_3$-sets. The arguments will be uniform, so that reducing every pair of $\Pi^1_3$-sets will follow immediately.

The ansatz is to use the two definable sequences of Suslin trees $\vec{S}^1$ and $\vec{S}^2$ for coding and a bookkeeping function $F$ which lists all possible reals in our iteration. We use an iterated forcing construction over $W$ of length $\omega_1$. At stages $\beta$, where $F(\beta)$ is (the name of) a real number $x$, we decide whether to code $x$ into the $\vec{S}^1$-sequence or the $\vec{S^2}$-sequence. Coding here means that we write the characteristic function of $x$ into $\aleph_1$-many $\omega$-blocks of elements of $\vec{S}^i$, $i \in \{1,2\}$ in a way such that the statement \ldq $x$ is coded into $\vec{S}^i$\rdq\, is a $\Sigma^1_3(x,i)$-statement and hence a $\Sigma^1_3(x)$-statement. We will see later that the definition of iterations of our coding forcing will only depend on the set of reals which are coded, and that the set of our coding forcings is closed under products.
Our goal is that eventually, after $\omega_1$ stages of our iteration, the resulting universe satisfies 
\begin{align*}
(1) \qquad \forall x \in A_m \cup A_k,\text{ either $x$ is coded into $\vec{S^1}$ or $x$ is coded into $\vec{S^2}$. } 
\end{align*} 
This dichotomy has to be strengthened to produce the desired reducing sets for $A_m$ and $A_k$ as follows:
We shall aim for a universe in which the set of reals of $A_m \cup A_k$ which are \emph{not} coded  into $\vec{S^1}$,  will be a subset of $A_m$, and the set of reals which are not coded into $\vec{S}^2$ will be a subset of $A_k$. Assuming we can pull this off, we obtain the following equality
\begin{align*}
(2) \qquad D^1_{m,k}:= & \{ x \in A_m  \, : \, x \text{ is  not coded into $\vec{S^1}$} \}  = \\& \{ x \in A_m  \cup A_k \, : \, x \text{ is coded  into $\vec{S^2}$} \}
\end{align*}
Note that the definition of $D^1_{m,k}$ witnesses that $D^1_{m,k}$ is $\Pi^1_3$, as being coded is $\Sigma^1_3$, hence not being coded is $\Pi^1_3$.

On the other hand, our universe should satisfy that reals in $A_k$ which are \emph{not} coded into $\vec{S^2}$ form a set  $D^2_{m,k}$ which eventually should reduce $A_k$:
\begin{align*}
(3) \qquad D^2_{m,k}:= & \{ x \in  A_k \, : \, x \text{ is  not coded  into $\vec{S^2}$} \} \\=& \{ x \in A_m \cup A_k \, : \, x \text{ is coded  into $\vec{S^1}$} \} 
\end{align*}
If we could achieve a universe for which (1), (2) and (3) holds, then $D^1_{m,k} \cup D^2_{m,k} = A_m \cup A_k$, the sets are $\Pi^1_3$, $D^1_{m,k} \subset A_m$ and $D^2_{m,k} \subset A_k$ and $D^1_{m,k} \cap D^2_{m,k} = \emptyset$, i.e. there are reducing sets for $A_m$ and $A_k$.
 
This set-up has the following difficulties one has to overcome: the evaluation of $\Pi^1_3$-sets changes as we use coding forcings, yet deciding to code a real into the, say, $\vec{S}^1$-sequence, once performed, can not be undone in future extensions, by the upwards absoluteness of $\Sigma^1_3$-formulas.

For an illustration of how this can be problematic, assume that $x \in L$ is a real which is in both $A_m$ and $A_k$. Now we need to decide, where to put $x$. As we have not settled on a strategy yet, how to do it, we simply
assume that we code the real $x$ into $\vec{S^2}$, which is equivalent to put $x$ into $D^1_{m,k}$. Now it could happen that this coding forcing, or some later coding forcing we will use, actually puts $x$ out of $A_m$, while $x$ remains in $A_k$. The consequence of this is that $x \in A_m \cup A_k$ witnesses that $D^1_{m,k}$ and $D^2_{m,k}$ do not reduce $A_m$ and $A_k$, as now $x \notin A_m$, hence $x\notin D^1_{m,.k}$ and $x$ is coded into $\vec{S}^2$, hence $x$ is not an element of $D^2_{m,k}$ either.

In the above situation, two possibilities arise, which already hint in a simplified setting how we eventually want to solve the problem of forcing the $\Pi^1_3$-reduction property.
First it could happen that there is an additional iteration of coding forcings which forces $x$ out of $A_k$. This would repair the misery, as after we did this additional forcing, $x$ is not an element of $A_m \cup A_k$ anymore and does not play a role in our task of reducing $A_m$ and $A_k$. Note however, that this additional forcing could possibly create new problems for other reals than $x$; indeed it could happen that we will create a non-empty set of reals $y \ne x$ for which the same pathological situation as for $x$ arises. Thus in order to reduce $A_m$ and $A_k$, we need  to ensure that the forcing we use to repair the situation for $x$ will not create new problems for other reals. This problem, if interpreted from the right angle, is a fixed point problem for sets of coding forcings. We will see later that solving this fixed point problem hands us the right set of coding forcings, which can safely force the real $x$ out of $A_m \cup A_k$ without adding unrepairable further pathological situations for other reals $y \ne x$.

In the second possibility, there is no such iterated coding forcing which forces $x$ out of $A_k$. 
But this also tells us, that instead of coding $x$ into $\vec{S}^2$, we could have coded $x$ into $\vec{S}^1$, that is we could have put $x$ into $D^2_{m,k}$ and $x$ will remain in $A_k$ after our coding, as otherwise we would have an iterated coding forcing which forces $x \notin A_k$, which we assumed to not exist. 
Note that this argument uses the upwards absoluteness of $\Sigma^1_3$-formulas, hence Shoenfield absoluteness, and the earlier mentioned fact that coding forcings are closed under taking products.

Thus, in the second case, the absence of a way to repair the bad situation for our real $x$ in fact yields valuable information. It tells us that we can safely code $x$ into the other of the two definable sequences of $\omega_1$-trees, namely $\vec{S^1}$, and $x$ will remain in $A_k$, as long as we just stick with iterating the coding forcings, which is what we will do anyway. So, assuming the second case applies, at least for the one real $x$ and the two fixed $\Pi^1_3$-sets $A_m$ and $A_k$ we found an unproblematic way of coding $x$.

Applying the above reasoning for every real we encounter in our iteration will lead to a new set of rules how to form an iteration of coding forcings, called 1-allowable, which, when applied, will yield a more reasonable class of coding forcings to produce a model for which (1), (2)  and (3) holds. For the set of 1-allowable forcings, the above mentioned pathological situations can arise again, but as before, we can use these newly arising problems to define a new set of rules, yielding a new set of forcings and so on. This leads to an inductive definition of an operator which acts on sets of iterated coding forcings. The operator, which can be seen as some sort of derivation, will be uniformly definable over $W$.
We will set things up in such a way that the process of applying the operator transfinitely often converges to a unique and non-empty set of forcings, which we will denote the $\infty$-allowable forcings. All of these considerations are preliminary. 

Eventually,  we can use the fact that $\infty$-allowable forcings form a fixed point to define an iteration in a seemingly self-referential way which, while iteratively producing the reducing sets $D ^1_{m,k}$ and $D^2_{m,k}$, will take care of all the pathological situations which arise along its definition.  
To be a bit more precise, if $\mathcal{P}$ denotes the class of $\infty$-allowable forcings, we can define an iteration as follows:  for any real $x$ and any pair of $\Pi^1_3$-sets $A_m$ and $A_k$, either there is a forcing from $\mathcal{P}$ which forces $x \notin A_m \cup A_k$ in which case we opt to use such a forcing from $\mathcal{P}$. Or, for $m$ or $k$, the real $x$ can not be forced out of $A_m$ or $A_k$ with a forcing from $\mathcal{P}$, in which case we code $(x,m,k)$ into the corresponding sequence of $\omega_1$-trees $\vec{S}^1$ or $\vec{S}^2$ to create our reducing sets. Most importantly, the so defined iteration is an element of $\mathcal{P}$ itself, which will ultimately imply that one can force  the $\Pi^1_3$-reduction property. 

A closer look will then yield that in that universe the $\Pi^1_3$-uniformization property fails.

\subsection{$\infty$-allowable Forcings}

\subsubsection{The coding forcings}

We continue with the construction of the appropriate notions of forcing which we want to use in our proof. The goal is to iteratively shrink the set of notions of forcing we want to use until we reach a fixed point. All forcings will belong to a certain class, which we call ``allowable$"$. These are just forcings which iteratively code reals into a generically added \footnote{The idea of generically add the set of $\omega$-blocks of indices of $\vec{S}^1$ or $\vec{S}^2$ is due to S. D. Friedman and D. Schrittesser in their landmark \cite{FS}.  We use it to obtain the closure of allowable forcings under products.} set of $\aleph_1$-many $\omega$-blocks of $\vec{S^1}$ or $\vec{S^2}$.
We first want to present the coding method, which we use to code a real $x$ up, using the definable sequence of $\omega_1$ trees, and subsequently introduce the notion allowable.

Our ground model shall be $W$. Let $x$ be a real,  and let $m,k \in \omega$. We simply write $(x,m,k)$ for a real $w$ which codes the triple $(x,m,k)$ in a recursive way. The forcing $\forceP_{(x,m,k),1}$, which codes the triple $(x,m,k)$ into $\vec{S^1}$ is defined as a two step iteration
 $\forceP_{(x,m,k),1} := (\mathbb{C} (\omega_1))^L \ast \dot{\mathbb{A}}(\dot{Y}) $, where $(\mathbb{C} (\omega_1))^L$ is the usual $\omega_1$-Cohen forcing, as defined in $L$, and $\dot{\mathbb{A}} (\dot{Y})$ is the (name of) an almost disjoint coding forcing, coding a particular set into as real. We shall describe the second factor $\dot{\mathbb{A}} (\dot{Y})$ now in detail.

We let $g \subset \omega_1$ be a $\mathbb{C} (\omega_1)^L$-generic filter over $W$, and let $\rho: [\omega_1]^{\omega} \rightarrow \omega_1$ be some canonically definable, constructible bijection between
these two sets. We use $\rho$ and $g$ to define the set $h \subset \omega_1$, which eventually shall be the set of indices of $\omega$-blocks of $\vec{S}$, where we code up the characteristic function of the real ($(x,y,m)$. Let $h:= \{\rho( g \cap \alpha) \,: \, \alpha < \omega_1 \}$ and let $X \subset \omega_1$ be the $<$-least set  (in some previously fixed well-order of $H(\omega_2)^W[g]$ which codes the following objects:
\begin{itemize}
\item The $<$-least set of $\omega_1$-branches in $W$ through elements of $\vec{S}^1$ which code $(x,y,m)$ at $\omega$-blocks which start at values in $h$, that is  we collect $\{ b_{\beta} \subset S^1_{\beta} \, : \, \beta= \omega \gamma + 2n, \gamma \in h \land n \in \omega \land n \notin (x,y,m) \}$ and  $\{ b_{\beta} \subset S^1_{\beta} \, : \, \beta= \omega \gamma + 2n+1, \gamma \in h \land n \in \omega \land n \in (x,y,m) \}$.
\item The $<$-least set of $\omega_1 \cdot \omega \cdot \omega_1$-many club subsets through $\vec{R}$, our $\Sigma_1 (\{\omega_1\})$-definable sequence of $L$-stationary subsets of $\omega_1$ from the last section, which are necessary to compute every tree $S_{\beta} \in \vec{S}^1$ which shows up in the above item, using the $\Sigma_1 (\{\omega_1\})$-formula from the previous section before Lemma 2.10.
\end{itemize}

Note that, when working in $L[X]$ and if $\gamma \in h$ then
 we can read off $(x,m,k)$  via looking at the $\omega$-block of $\vec{S^1}$-trees starting at $\gamma$ and determine which tree has an $\omega_1$-branch in $L[X]$:
\begin{itemize}
 \item[$(\ast)$]  $n \in (x,m,k)$ if and only if $S^1_{\omega \cdot \gamma +2n+1}$ has an $\omega_1$-branch, and $n \notin (x,m,k)$ if and only if $S^1_{\omega \cdot \gamma +2n}$ has an $\omega_1$-branch.
\end{itemize}
Indeed if $n \notin (x,m,k)$ then we added a branch through $S^1_{\omega \cdot \gamma+ 2n}$. If on the other hand $S^1_{\omega \cdot\gamma +2n}$ is 
Suslin in $L[X]$ then we must have added an $\omega_1$-branch through $S^1_{\omega \cdot \gamma +2n+1}$ as we always add an $\omega_1$-branch through either $S^1_{\omega \cdot \gamma +2n+1}$ or $S^1_{\omega \cdot \gamma +2n}$ and adding branches through some $S^1_{\alpha}$'s  will not affect that some $S^1_{\beta}$ is Suslin in $L[X]$, as $\vec{S}^1$ is independent.

We note that we can apply an argument resembling David's trick \footnote{see \cite{David} for the original argument, where the strings in Jensen's coding machinery are altered such that certain unwanted universes are destroyed. This destruction is emulated in our context as seen below.} in this situation. We rewrite the information of $X \subset \omega_1$ as a subset $Y \subset \omega_1$ using the following line of reasoning.
It is clear that any transitive, $\aleph_1$-sized model $M$ of $\ZFP$ which contains $X$ will be able to correctly decode out of $X$ all the information.
Consequentially, if we code the model $(M,\in)$ which contains $X$ as a set $X_M \subset \omega_1$, then for any uncountable $\beta$ such that $L_{\beta}[X_M] \models \ZFP$ and $X_M \in L_{\beta}[X_M]$:
\[L_{\beta}[X_M] \models \text{\ldq The model decoded out of }X_M \text{ satisfies $(\ast)$ for every $\gamma \in h$\rdq.} \]
In particular there will be an $\aleph_1$-sized ordinal $\beta$ as above and we can fix a club $C \subset \omega_1$ and a sequence $(M_{\alpha} \, : \, \alpha \in C)$ of countable elementary submodels  of $L_{\beta} [X_M]$ such that
\[\forall \alpha \in C (M_{\alpha} \prec L_{\beta}[X_M] \land M_{\alpha} \cap \omega_1 = \alpha)\]
Now let the set $Y\subset \omega_1$ code the pair $(C, X_M)$ such that the odd entries of $Y$ should code $X_M$ and if $Y_0:=E(Y)$ where the latter is the set of even entries of $Y$ and $\{c_{\alpha} \, : \, \alpha < \omega_1\}$ is the enumeration of $C$ then
\begin{enumerate}
\item $E(Y) \cap \omega$ codes a well-ordering of type $c_0$.
\item $E(Y) \cap [\omega, c_0) = \emptyset$.
\item For all $\beta$, $E(Y) \cap [c_{\beta}, c_{\beta} + \omega)$ codes a well-ordering of type $c_{\beta+1}$.
\item For all $\beta$, $E(Y) \cap [c_{\beta}+\omega, c_{\beta+1})= \emptyset$.
\end{enumerate}
We obtain
\begin{itemize}
\item[$({\ast}{\ast})$] For any countable transitive model $M$ of $\ZFP$ such that $\omega_1^M=(\omega_1^L)^M$ and $ Y \cap \omega_1^M \in M$, $M$ can construct its version of the universe $L[Y \cap \omega_1^N]$, and the latter will see that there is an $\aleph_1^M$-sized transitive model $N \in L[Y \cap \omega_1^N]$ which models $(\ast)$ for $w$ and every $\gamma \in h \cap M$.
\end{itemize}
Thus we have a local version of the property $(\ast)$.

In the next step $\dot{\mathbb{A}} (\dot{Y})$, working in $W[g]$, for $g\subset \mathbb{C} (\omega_1)$ generic over $W$, we use almost disjoint forcing $\mathbb{A}_D(Y)$ relative to our previously defined, almost disjoint family of reals $D \in  L $ (see the paragraph after Definition 2.5)  to code the set $Y$ into one real $r$. This forcing only depends on the subset of $\omega_1$ we code, thus $\mathbb{A}_D(Y)$ will be independent of the surrounding universe in which we define it, as long as it has the right $\omega_1$ and contains the set $Y$.

We finally obtained a real $r$ such that
\begin{itemize}
\item[$({\ast}{\ast}{\ast})$] For any countable, transitive model $M$ of $\ZFP$ such that $\omega_1^M=(\omega_1^L)^M$ and $ r  \in M$, $M$ can construct its version of $L[r]$ which in turn thinks that there is a transitive $\ZFP$-model $N$ of size $\aleph_1^M$  such that $N$ believes $(\ast)$ for $w$ and every $\gamma \in h \cap M$.
\end{itemize}
Note that $({\ast} {\ast} {\ast})$ is a $\Pi^1_2$-formula in the parameters $r$ and $w$, as the set $h \cap M \subset \omega_1^M$ is coded into $r$. We will often suppress the reals $r,w$ when referring to $({\ast} {\ast} {\ast})$ as they will be clear from the context. We say in the above situation that the real $w$, which codes $(x,m,k)$\emph{ is written into $\vec{S}^1$}, or that $w$ \emph{is coded into} $\vec{S^1}$ and $r$ witnesses that $w$ is coded. Likewise a forcing $\forceP_{(x,m,k),2}$ is defined for coding the real $w$ which codes $(x,m,k)$ into $\vec{S^2}$.

 The projective and local statement $({\ast} { \ast} {\ast} )$, if true,  will determine how certain inner models of the surrounding universe will look like with respect to branches through $\vec{S}$.
That is to say, if we assume that $({\ast} { \ast} {\ast} )$ holds for a real $w$ and is the truth of it is witnessed by a real $r$. Then $r$ also witnesses the truth of $({\ast} { \ast} {\ast} )$ for any transitive $\ZFP$-model $M$ which contains $r$ (i.e. we can drop the assumption on the countability of $M$).
Indeed if we assume 
that there would be an uncountable, transitive $M$, $r \in M$, which witnesses that $({\ast} { \ast} {\ast} )$ is false. Then by L\"owenheim-Skolem, there would be a countable $N\prec M$, $r\in N$ which we can transitively collapse to obtain the transitive $\bar{N}$. But $\bar{N}$ would witness that $({\ast} { \ast} {\ast} )$ is not true for every countable, transitive model, which is a contradiction.

Consequentially, the real $r$ carries enough information that
the universe $L[r]$ will see that certain trees from $\vec{S}^1$ have branches in that
\begin{align*}
n \in w=(x,y,m) \Rightarrow L[r] \models  ``S^1_{\omega \gamma + 2n+1} \text{ has an $\omega_1$-branch}".
\end{align*}
and
\begin{align*}
n \notin w=(x,y,m) \Rightarrow L[r] \models ``S^1_{\omega \gamma + 2n} \text{ has an $\omega_1$-branch}".
\end{align*}
Indeed, the universe $L[r]$ will see that there is a transitive $\ZFP$-model $N$ which believes $(\ast)$ for every $\gamma \in h \subset \omega_1$, the latter being coded into $r$. But by upwards $\Sigma_1$-absoluteness, and the fact that $N$ can compute $\vec{S}^1$ correctly, if $N$ thinks that some tree in $\vec{S^1}$ has a branch, then $L[r]$ must think so as well.

\subsubsection{Allowable forcings}

Next we define the set of forcings which we will use in our proof.
We aim to iterate the coding forcings we defined in the last section. As the first factor is always $(\mathbb{C}(\omega_1))^L$, the iteration we aim for is actually a hybrid of an iteration and a product. We shall use a mixed support, that is we use countable support on the  product-like coordinates which use $(\mathbb{C}(\omega_1))^L$, and finite support on the iteration-like coordinates which use almost disjoint coding forcing.
\begin{definition}
A mixed support iteration $\forceP=(\forceP_{\beta}\,:\, {\beta< \alpha})$ is called allowable (or 0-allowable, to anticipate later developments) if $\alpha < \omega_1$ and there exists a bookkeeping function $F: \alpha \rightarrow H(\omega_2)^2$ such that 
 $\forceP$ is defined inductively using $F$ as follows:
 \begin{itemize}
 \item If $F(0)=(x,i)$, where $x$ is a real, $i\in \{1,2\}$, then $\forceP_0= \forceP_{x,i}$. Otherwise $\forceP_0$ is the trivial forcing.
 \item Assume that $\beta>0$ and $\forceP_{\beta}$ is defined, $G_{\beta} \subset \forceP_{\beta}$ is a generic filter over $W$. Moreover assume that $F(\beta)=(\dot{x}, i)$, where $\dot{x}$ is a $\forceP_{\beta}$-name of a real, $i \in \{1,2\}$ and $\dot{x}^{G_{\beta}}=x$. Then let $\forceP (\beta)= \forceP_{x,i}= \mathbb{C}(\omega_1))^L \ast {\mathbb{A}} (Y) $, for the reshaped $Y\subset \omega_1$ as being defined in the last section, and let $\forceP_{\beta+1}= \forceP_{\beta} \times (\mathbb{C} (\omega_1))^L \ast \dot{\mathbb{A}} (\dot{Y} )$.

Otherwise we force with just $(\mathbb{C} (\omega_1))^L$.
 \end{itemize}
 We use finite support on the iteration-like parts where almost disjoint coding is used and countable support on the product-like parts where $\omega_1$-Cohen forcing, as computed in $L$ is used.

\end{definition}
 Informally speaking, a (0-) allowable forcing just decides to code the reals which the bookkeeping $F$ provides into either $\vec{S^1}$ or $\vec{S^2}$. Note that the notion of allowable can be defined in exactly the same way over any $W[G]$, where $G$ is a $\forceP$-generic filter over $W$ for an allowable forcing.

We also add that we could have defined allowable in an equivalent way if  we first added, over $W$,  $\omega_1$-many Cohen subsets of $\omega_1$ $\vec{C}=(C_\alpha \, : \, \alpha < \omega_1)$ (in fact we don't really need to do this as we already added them when adding the sequence of $\omega_1$-trees $\vec{S}^1$ and $\vec{S}^2$) with a countably supported product, and then define allowable over the new ground model $W[\vec{C} ]$ as just a finitely supported iteration of almost disjoint coding forcings which select at each step injectively one element $C$ from $\vec{C}$ and the real given by the bookkepping $F$ and the $i \in \{1,2\}$ and then code up all the branches of the trees from $\vec{S}^1$ or $\vec{S}^2$ according to the real $x$ we code for every $\omega$-block with starting value in $h \subset \omega_1$ derived from $C$ as in the last section. That is to say, we could have moved the product factors in an iteration of allowable forcings right at the beginning of our iteration, which we are allowed to do anyway, as it is a product. 
Our current and equivalent approach is a bit easier in terms of notation so we defined allowable the way we did.
 
 We obtain the following first properties of allowable forcings:
 \begin{lemma}

\begin{enumerate}
\item If $\forceP=(\forceP(\beta) \, : \, \beta < \delta) \in W$ is allowable then for every $\beta < \delta$, $\forceP_{\beta} \Vdash| \forceP(\beta)|= \aleph_1$, thus every factor of $\forceP$ is forced to have size $\aleph_1$.
\item Every allowable forcing over $W$ preserves $\omega_1$.
\item The product of two allowable forcings is allowable again.
\end{enumerate}
\end{lemma}
\begin{proof}
The first assertion follows immediately from the definition.

To see the second item we exploit some symmetry.
Indeed, every allowable $\forceP = \Asterisk_{\beta < \delta} P(\beta)= \Asterisk_{\beta < \delta} ( ((\mathbb{C} (\omega_1))^L \ast \dot{\mathbb{A}} (\dot{Y_{\beta} }) )$ can be rewritten as $(\prod_{\beta < \delta}  (\mathbb{C} (\omega_1))^L  )\ast (\Asterisk_{\beta < \delta}  \dot{\mathbb{A}}_D (\dot{Y}_{\beta} ))$ (again with countable support on the  $(\mathbb{C} (\omega_1))^L$ part and finite support on the almost disjoint coding forcings). The latter representation is easily seen to be of the form $\forceP \ast (\Asterisk_{\beta < \delta}  \dot{\mathbb{A}}_D(\dot{Y}_{\beta} ))$, where $\forceP$ is $\sigma$-closed and the second part is a finite support iteration of ccc forcings, hence $\omega_1$ is preserved.

To see that the third item is true, we note that the definition of almost disjoint coding forcing only depends on the subset of $\omega_1$ we want to code and is independent of the surrounding universe $V \supset W$ over which it is defined as long as $Y \in  V$. In particular, if $(Y_{\alpha} \subset \omega_1 \,: \, \alpha < \beta)$ is a sequence of subsets of $\omega_1$ in some ground model, then the finitely supported iteration $\Asterisk_{\alpha< \beta} \dot{\mathbb{A}} (\check{Y}_{\alpha} )$ is isomorphic to the finitely supported product $\prod_{\alpha < \beta} \mathbb{A} (Y_{\alpha} )$.
So we immediately see that if
\[ \forceP^1 = \Asterisk_{\beta < \delta^1} P(\beta)= \prod_{\beta < \delta^1} ( ((\mathbb{C} (\omega_1))^L)  \Asterisk_{\beta < \delta^1} \dot{\mathbb{A}} (\dot{Y_{\beta} }) ) \]
and
\[ \forceP^2 = \Asterisk_{\beta < \delta^2} P(\beta)= \prod_{\beta < \delta^2} ( ((\mathbb{C} (\omega_1))^L \Asterisk_{\beta < \delta^2} \dot{\mathbb{A}} (\dot{Y_{\beta} }) )\]
then
\[ \forceP^1 \times \forceP^2 = \prod_{\beta < \delta^1 + \delta^2} ((\mathbb{C} (\omega_1))^L \Asterisk_{\beta <\delta^1} \dot{\mathbb{A}} (\dot{Y_{\beta} }) )  \Asterisk_{\beta < \delta^2} \dot{\mathbb{A}} (\dot{Y}_{\beta}) \]
which is allowable.
\end{proof}

The proof of the second assertion of the last lemma immediately gives us the following:
\begin{corollary}
Let $\forceP= (\forceP(\beta) \, : \, \beta < \delta) \in W$ be an allowable forcing over $W$. Then $W[\forceP] \models \CH$. Further, if $\forceP= (\forceP(\alpha) \, : \, \alpha < \omega_1) \in W$ is an $\omega_1$-length iteration such that each initial segment of the iteration is allowable over $W$, then $W[\forceP] \models \CH$.

\end{corollary}
Let $\forceP= (\forceP(\beta) \, : \, \beta < \delta)$ be an allowable forcing with respect to some $F \in W$.
The set of  (names of) reals which are enumerated by $F$ we call the set of reals coded by $\forceP$. That is, for every $\beta$, if we let $\dot{x}_{\beta}$ be the (name) of a real  listed by $F(\beta)$ and if we let $G \subset \forceP$ be a generic filter over $W$ and finally if we let
$ \dot{x}_{\beta}^G =:x_{\beta}$,  then we say that
$\{ x_{\beta} \, : \, \beta < \alpha \}$ is the set of reals coded by $\forceP$ and $G$ (though we will suppress the $G$).
Next we show, that iterations of 0-allowable forcings will not add unwanted witnesses to the $\Sigma^1_3$-formula $\psi(w,i)$ which corresponds to the formula $({\ast} {\ast} {\ast})$:
\begin{align*}
 \psi(w,i) \equiv \exists r  \forall M (&M \text{ is countable and transitive and } M \models \ZFP \\&\text{ and } \omega_1^M=(\omega_1^L)^M \text{ and }  r, w \in M  \rightarrow M \models \varphi(w,i) )
\end{align*}
where $\varphi(w,i)$ asserts that in $M$'s version of $L[r]$, there is a transitive, $\aleph_1^M$-sized $\ZFP$-model which witnesses that $w$ is coded into $\vec{S}^i$.
\begin{lemma}\label{nounwantedcodes}
If $\forceP \in W$ is allowable, $\forceP=(\forceP_{\beta} \, : \, \beta < \delta)$, $G \subset \forceP$ is generic over $W$ and $\{ x_{\beta} \, : \, \beta < \delta\}$ is the set of reals which is coded by  $\forceP$. Let $\psi(v_0)$ be the distinguished formula from above. Then
in $W[G]$, the set of reals which satisfy $\psi(v_0)$ is exactly 
$\{ x_{\beta} \, : \, \beta < \delta\}$, that is, we do not code any unwanted information accidentally.
\end{lemma}
\begin{proof}
Let $G$ be $\forceP$ generic over $W$. Let $g= (g_{\beta} \, : \, {\beta} < \delta)$ be the set of the $\delta$ many $\omega_1$ subsets added by the $(\mathbb{C} (\omega_1))^L$-part of the factors of $\forceP$. We let $\rho : ([\omega_1]^{\omega})^L \rightarrow \omega_1$ be our fixed, constructible bijection and let $h_{\beta}= \{ \rho (g_{\beta} \cap \alpha) \, : \, \alpha < \omega_1\}$. Note that the family $\{h_{\beta} \,: \, \beta < \delta \}$ forms an almost disjoint family of subsets of $\omega_1$. Thus there is $\alpha < \omega_1$ such that for arbitrary distinct $\beta_1$, $\beta_2 < \delta$,  $\alpha> h_{\beta_1}\cap h_{\beta_2}$  and additionally, assume that $\alpha$ is an index which does not show up in the set of indices of the trees we code with $\forceP$.

We let $S^1_{\alpha} \in \vec{S}^1$. We claim that there is no real in $W[G]$ such that $W[G] \models L[r] \models ``S^1_{\alpha}$ has an $\omega_1$-branch$"$.
We show this by pulling out the forcing $S^1_{\alpha}$ out of $\forceP$. 
Indeed if we consider $W[\forceP]=L[\forceQ^0] [\forceQ^1][\forceQ^2][\forceP]$, and if $S^1_{\alpha}$ is as described already,
we can rearrange this to $W[\forceP]= L [\forceQ^0] [\forceQ'^1 \times S^1_{\alpha} ] [ \forceQ^2] [\forceP] = W[\forceP'] [S^1_{\alpha} ]$, where $\forceQ'^1$ is $\prod_{\beta \ne \alpha}  S^1_{\beta}$ and $\forceP'$ is $\forceQ^0 \ast \forceQ'^1 \ast \forceQ^2 \ast \forceP$.

Note now that, as $S^1_{\alpha}$ is $\omega$-distributive, $2^{\omega} \cap W[\forceP] = 2^{\omega} \cap W[\forceP']$, as $S_{\alpha}$ is still a Suslin tree in $W[\forceP']$ by the fact that $\vec{S}^1$ and $\vec{S}^2$ are independent, and no factor of $\forceP'$ besides the trees from $\vec{S}^1$ and $\vec{S^2}$ used in $\forceP'$ destroys Suslin trees. But this implies that 
\[W[\forceP'] \models \lnot \exists r L[r] \models `` S^1_{\alpha} \text{ has an $\omega_1$-branch}" \]
as the existence of an $\omega_1$-branch through $S^1_{\alpha}$ in the inner model $L[r]$ would imply the existence of such a branch in $W[\forceP']$. Further
and as no new reals appear when passing to $W[\forceP]$ we also get 
\[W[\forceP] \models \lnot \exists r L[r] \models `` S^1_{\alpha} \text{ has an $\omega_1$-branch}". \]

On the other hand any unwanted information, i.e. any $(x,m) \notin \{(x_{\beta}, m_{\beta}) \, : \, \beta < \delta \}$ such that $W[G] \models \psi((x,i,m))$ will satisfy that there is a real $r$ such that
\[n \in (x,i,m) \rightarrow L[r] \models ``S^1_{\omega \gamma+2n+1} \text{ has an $\omega_1$-branch}" \]
and
\[ n \notin (x,i,m) \rightarrow L[r] \models ``S^1_{\omega \gamma+2n} \text{ has an $\omega_1$-branch}". \]
by the discussion of the last subsection  for $\omega_1$-many $\gamma$'s.

But by the argument above, only trees which we used in one of the factors of $\forceP$ have this property, so there can not be unwanted codes on the $\vec{S}^1$-side. But the very same argument shows the assertion also for the $\vec{S}^2$-side. So for our fixed $\alpha$, there is no real $r$ which codes an $\omega_1$ branch over $L[r]$. But any unwanted information would need not only one but even $\aleph_1$-many such $\alpha$'s chosen as above. This shows that there can not be unwanted information in $W[G]$, as claimed.


\end{proof}

We define next a derivative of the class of allowable forcings.Work with $W$ as our ground model.  Inductively we assume that for an ordinal $\alpha$ and an arbitrary bookkeeping function $F \in W$ mapping to $H(\omega_2)^2$, we have already defined the notion of $\delta$-allowable with respect to $F$ for every $\delta < \alpha$, and the definition works uniformly for every model $W[G]$, where $G$ is a generic filter for an allowable forcing. Note that these inductive requirements are met for 0-allowable forcings. Now we aim to define the derivation of the $<\alpha$-allowable forcings which we call $\alpha$-allowable.

\begin{definition}
Let  $\delta  < \omega_1$ then a $\delta$-length iteration $\forceP$ is called $\alpha$-allowable if it is recursively constructed using a bookkeeping function $F: \delta \rightarrow H(\omega_2)^2$, such that for every $\beta < \delta$, $F(\beta)$ is a pair $(F(\beta)_0, F(\beta)_1)$, and two rules at every stage $\beta < \delta$ of the iteration. We assume inductively that we already created the forcing iteration up to $\beta$, $\forceP_{\beta}$ and we let $G_{\beta}$ denote a hypothetical $\forceP_{\beta}$-generic filter over $W$. We shall now define the next forcing of our iteration $\forceP(\beta)$. Using the bookkeeping $F$ we split into two cases.
\begin{enumerate}
\item[(a)] We assume first that the first coordinate of $F(\beta) ,(F(\beta))_0=(\dot{x},m,k)$, where $\dot{x}$ is the $\forceP_{\beta}$-name of a real and $m<k$ are natural numbers. Further we assume that $\dot{x}^{G_{\beta}}=x$, and $W[G_{\beta}] \models x \in A_m \cup A_k$. 
We assume
 that in $W[G_{\beta}]$, the following is true:
\begin{enumerate}
\item[] There is an ordinal $\zeta < \alpha$, which is chosen to be minimal for which
\item[(i)] for every $\zeta$-allowable forcing $\forceQ \in W[G_{\beta}]$  we have that, over $W[G_{\beta}]$:
\begin{align*}
  \forceQ \Vdash  x \in A_m
\end{align*}
In this situation we force with $\forceP_{(x,m,k),2}$ at stage $\beta$, that is, in $W[G_{\beta}]$, we set $\forceP(\beta):=\forceP_{(x,m,k),2}$.
\item[(ii)] If (i) for $\zeta$ is false but the dual situation is true, i.e.
for every $\zeta$-allowable forcing $\forceQ \in W[G_{\beta}]$, we have that $W[G_{\beta}] $ thinks that
\begin{align*}
  \forceQ \Vdash   x \in A_k
\end{align*}
Then we define $\forceP(\beta)$ to be $\forceP_{(x,m,k),1}$.
\end{enumerate}
If both $(a) (i)$ and $(a) (ii)$ are true for the same $\zeta$, then we give case $(a) (i)$ preference, and suppress case $(a) (ii)$.


\item[(b)] Else $F$ guesses where we  code $(x,m,k)$, i.e. we code $(x,m,k)$ into $\vec{S}^{F(\beta)_1}$, provided  $F(\beta)_1 \in \{1,2\}$ (otherwise we decide to code $(x,m,k)$ into $\vec{S}^1$ per default).
\end{enumerate}

This ends the definition of $\forceP$ being $\alpha$-allowable with respect to $F$ at successor stages $\beta+1$. To define the limit stages $\beta$ of an $\alpha$-allowable forcing,
we assume that we have defined already
$(\forceP_{\gamma} \, : \, \gamma < \beta)$ and let the limit
$\forceP_{\beta}$ to be defined as the inverse limit of the $\mathbb{C}^L$-factors we used in the $\forceP_{\gamma}$'s, and the direct limit of the 
factors of $\forceP_{\gamma}$, which correspond to the almost disjoint coding forcings. In other words we use the already mentioned mixed support, that is countable support on the $\omega_1$-Cohen part, and finite support on the almost disjoint coding part.
\end{definition}

We finally have finished the definition of an $\alpha$-allowable forcing relative to a perviously fixed bookkeeping function $F$. In the following  we often drop the reference to $F$ and simply say that some forcing $\forceP$ is $\alpha$-allowable, in which case we always mean that there is some $F$ such that $\forceP$ is $\alpha$-allowable relative to $F$.
 
We briefly describe a typical run through the cases in the definition of $\alpha$-allowable forcings. Given our bookkeeping $F: \delta \rightarrow H(\omega_2)^2$, the according allowable $\forceP= (\forceP(\beta) \, : \, \beta < \delta)$ forcing is constructed such that at every stage $\beta< \delta$  we ask whether there exists for $\zeta=0$ a $\forceQ$ such that (a)(i) becomes true. If not then we ask the same question for (a)(ii). If both are false, we pass to $\zeta=1$, and so on. If (a) (i) or (a) (ii) never applies for any $\zeta < \alpha$, we pass to (b). It is therefore intuitively clear, and will be proved in a moment, that the notion of $\alpha$-allowable has to satisfy more and more requirements as $\alpha$ increases, hence the classes of $\alpha$-allowable forcings should become smaller and smaller. As a further consequence of this, case (a) in the definition becomes easier and easier to satisfy, which leads in turn to more restrictions of how an $\alpha$-allowable forcing can look like.

\begin{lemma}
For any ordinal $\alpha$, the notion $\alpha$-allowable is definable over the universe $W$. 
\end{lemma}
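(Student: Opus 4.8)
The plan is to read the definition of ``$\alpha$-allowable'' literally as a definition by transfinite recursion on $\alpha$: the datum attached to $\alpha$ --- namely the collection of pairs $(\forceP,F)$ such that $\forceP$ is $\alpha$-allowable relative to $F$ --- is obtained from the analogous data for all smaller ordinals by one fixed recursive rule, i.e.\ clauses (a)(i), (a)(ii), (b) together with the direct-limit prescription at limit stages of the iteration. Since an allowable forcing and its bookkeeping function both have hereditary cardinality $\le\aleph_1$, and hence lie in the definable set $H(\omega_2)^W$, this datum is a \emph{set}, so that one is in the scope of the ordinary transfinite recursion theorem: it suffices to check that the recursive rule is uniformly definable over $W$, and then the recursion theorem produces a single formula $\mathrm{Allow}(\alpha,\forceP,F)$, with parameters in $W$, defining the relation ``$\forceP$ is $\alpha$-allowable relative to $F$''; specialising the first argument gives the lemma. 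Thus three things have to be verified: that plain ``allowable'' is definable over $W$, that the successor rule ($\alpha+1$-allowable in terms of $\delta$-allowable for $\delta\le\alpha$) is definable, and that the limit rule ($\alpha$-allowable in terms of $\delta$-allowable for $\delta<\alpha$) is definable.

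The base notion is straightforward: $\forceP$ is allowable relative to $F$ iff $\forceP$ is a finite support iteration of length $<\omega_1$ whose non-trivial factors are exactly the coding forcings $\forceP_{(x,m,k),i,\gamma}$ prescribed by $F$, and each such factor is definable from its parameters $x,m,k,i,\gamma$ using $\vec S^1$ and $\vec S^2$, which are $\Sigma_1(\omega_1)$-definable over $H(\omega_2)^W$; the side condition ``the $\gamma$-th block of $\vec S^i$ has not yet been used'' is likewise definable, since by the property $(\ast)$ it records a $\Sigma_1(\omega_1)$-expressible event about Suslin-ness of the trees in that block, together with the bookkeeping constraint that no earlier factor claimed it. All quantifiers here are bounded by the definable set $H(\omega_2)$, and the very same description, with the same parameters, makes sense in every $W[G]$ with $G$ generic for an allowable forcing.

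For the recursive step, assume inductively that for every $\delta<\alpha$ the relation ``$\forceQ$ is $\delta$-allowable relative to $E$'' --- interpreted over $W$ \emph{and} over every $W[G]$ with $G$ generic for an allowable forcing --- is definable by a formula with parameters in $W$. Inspecting clauses (a)(i), (a)(ii), (b), one sees that the passage from $(\forceP_\beta,\forceR_\beta,F(\beta))$ to $(\forceP(\beta),\forceR(\beta))$ appeals only to: quantifiers over allowable forcings $\forceR$ and over ordinals $\gamma<\omega_1$ and $\zeta\le\alpha$ (at limits $\zeta<\alpha$), all ranging over sets; a universal quantifier over $\forceP_{(x,m,k),i,\gamma}$-names $\forceQ$ for allowable forcings, which may be confined to $H(\omega_2)$ because in the ccc poset $\forceP_{(x,m,k),i,\gamma}$ of size $\le\aleph_1$ every such name is equivalent to a nice name of hereditary size $\le\aleph_1$; the forcing relations $\Vdash_{\forceP_{(x,m,k),i,\gamma}}$, $\Vdash_{\forceR_\beta\times\forceR\times\forceP_{(x,m,k),i,\gamma}}$ and $\Vdash_{\forceP_{(x,m,k),i,\gamma}\ast\forceQ}$ applied to the statements ``$\forceQ$ is allowable'', ``$\forceQ$ is $\zeta$-allowable'' and ``$x\in A_m$'' (or ``$x\in A_k$''), the first of these statements being definable by the base case, the second by the induction hypothesis --- this is the one point where one genuinely needs the lower levels definable \emph{uniformly}, i.e.\ also inside the intermediate extension by $\forceP_{(x,m,k),i,\gamma}$ --- the third being the instance for the index $m$ of a fixed universal $\Pi^1_3$ formula, and the three forcing relations themselves being definable over $W$ by the definability lemma for set forcing; and finally the coordinatewise inspection of $F$. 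Composing all this, the one-step rule and the direct-limit rule are uniformly definable, hence ``the sequence $(\forceP_\gamma,\forceR_\gamma)_{\gamma\le\delta}$ is a legitimate $\alpha$-allowable run of $F$'' is definable, and so is ``$\forceP$ is $\alpha$-allowable relative to $F$'', meaning $\forceP=\forceP_\delta$ for some such run. Because the only parameters used ($\vec S^1$, $\vec S^2$ and the $\Pi^1_3$ formula) are interpreted in each $W[G]$ by the same absolute formulas, this entire analysis goes through verbatim over $W[G]$, which propagates the uniform induction hypothesis to level $\alpha$.

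Invoking the transfinite recursion theorem in $W$ then makes the class function $\alpha\mapsto D_\alpha$, where $D_\alpha$ is the set of pairs $(\forceP,F)$ with $\forceP$ being $\alpha$-allowable relative to $F$, definable over $W$, which is the desired formula $\mathrm{Allow}$. The points I expect to require the most care are the two ingredients just flagged: that the universal quantifier over names $\forceQ$ for allowable forcings can be restricted to a set without changing its meaning, and that the relevant forcing relations --- in particular the one asserting the $\Pi^1_3$ statement ``$x\in A_m$'' over the two-step iteration $\forceP_{(x,m,k),i,\gamma}\ast\forceQ$ --- are definable over $W$. Both are standard (nice-name counting for ccc forcings and the definability lemma for $\Vdash$), but they are precisely what makes the superficially self-referential clauses (a)(i)/(a)(ii) legitimate rather than circular.
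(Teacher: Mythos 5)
Your argument is correct and is essentially the one the paper takes for granted: the paper states this lemma without proof, treating it as the routine observation that the clauses defining $\alpha$-allowable form a set-valued transfinite recursion whose one-step rule is definable over $W$ (and uniformly over the relevant $W[G]$), which is exactly what you spell out. The two points you flag — bounding the quantifier over names $\forceQ$ by nice names in $H(\omega_2)$ using ccc plus $\CH$, and the definability of the forcing relation for the statements ``$\forceQ$ is $\zeta$-allowable'' and ``$x\in A_m$'' — are indeed the only places where anything needs checking, and your treatment of them is fine.
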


\begin{lemma}
If $\forceP$ is $\beta$-allowable and $\alpha < \beta$, then $\forceP$ is also $\alpha$-allowable. Thus the classes of $\alpha$-allowable forcings become smaller with respect to the subset relation, if $\alpha$ increases.
\end{lemma}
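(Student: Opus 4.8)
We first note that the two sentences are equivalent: writing $\mathcal{A}_{\mu}$ for the class of $\mu$-allowable forcings, the second asserts $\mathcal{A}_{\beta}\subseteq\mathcal{A}_{\alpha}$ whenever $\alpha<\beta$, which is merely a restatement of the first. So the plan is to fix a forcing $\forceP$, an ordinal $\beta$ and a bookkeeping function $F$ witnessing that $\forceP$ is $\beta$-allowable --- together with the generic filter $G$, the virtual forcings $\forceR_{\xi}$ and the tree-index sets $b_{\xi}$ produced alongside $\forceP$ in that recursion --- to fix $\alpha<\beta$, and to manufacture a bookkeeping function $F'$ witnessing that $\forceP$ is $\alpha$-allowable. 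No induction on $\beta$ is needed. The only difference between the $\mu$-allowable constructions as $\mu$ varies is that in case (a) the auxiliary ordinal $\zeta$ must lie strictly below the level $\mu$ (for a successor level $\nu+1$ the paper phrases this as $\zeta\le\nu$, which is the same); the notions ``$\zeta$-allowable'' invoked inside clauses (i) and (ii) are fixed definitions, and whether an ordinal $\zeta'$ is \emph{suitable} at a stage --- meaning clause (i) or clause (ii) holds for it there --- does not depend on the ambient level $\mu$.

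The construction of $F'$ would proceed by recursion on the stage $\xi<\delta$ of the iteration, re-running the $\alpha$-allowable recursion in parallel, and maintaining as an invariant that after stage $\xi$ the two recursions have built the same partial iteration $\forceP_{\xi}$, the same virtual forcing $\forceR_{\xi}$ (equivalently, the same index set $b_{\xi}$), and hence agree on $G_{\xi}$. At a stage $\xi$ where the $\beta$-recursion is in case (b), or in case (a) with minimal suitable ordinal $\zeta<\alpha$, set $F'(\xi):=F(\xi)$. In the case-(b) situation both recursions code $x$ into the same free block and perform the same update of the virtual forcing. In the case-(a) situation, since $\zeta<\alpha\le\beta$ and suitability is level-independent, the $\alpha$-recursion also enters case (a); and since no ordinal below $\zeta$ is suitable --- by minimality of $\zeta$ already in the larger range --- it selects the very same minimal $\zeta$, makes the same choice between clauses (i) and (ii), and makes the same choices of the allowable forcing and of $\gamma$, all of these being determined by $F(\xi)_{1}$, $F(\xi)_{2}$ and the common state. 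Limit stages are immediate, both recursions taking direct limits.

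The delicate stage is the one where the $\beta$-recursion is in case (a) with minimal suitable $\zeta$ satisfying $\alpha\le\zeta<\beta$. By minimality no ordinal $<\alpha$ is suitable, so the $\alpha$-recursion is forced into case (b) at $\xi$; this is legal because case (b) imposes no hypothesis such as $x\in A_{m}\cup A_{k}$. Say the $\beta$-recursion used clause (i), coding $x$ into $\vec{S}^{2}$ at some ordinal $\gamma$ via $\forceP_{(x,m,k),2,\gamma}$ and replacing $\forceR_{\xi}$ by $\forceR\times\forceR_{\xi}$ for the selected allowable forcing $\forceR$ (clause (ii) is handled symmetrically, with $\vec{S}^{1}$). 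I would then let $F'(\xi)$ be the quadruple that drives case (b) to code $x$ into $\vec{S}^{2}$ at $\gamma$ and to multiply $\forceR$ into the virtual forcing; this is legitimate, since $\forceR$ and $\gamma$ lie in $H(\omega_{2})$. The $\alpha$-recursion's case (b) then does precisely that: it codes $x$ into $\vec{S}^{2}$ at $\gamma$ and replaces $\forceR_{\xi}$ by $\forceR_{\xi}\times\forceR$. Here the block of $\vec{S}^{2}$ at $\gamma$ is indeed free for the $\alpha$-recursion, and $\forceR_{\xi}\times\forceR$ is indeed allowable, because both facts are contained in the requirement, built into clause (i), that $\forceR_{\xi}\times\forceR\times\forceP_{(x,m,k),2,\gamma}$ be allowable --- that is, that the tree-index sets of $\forceR_{\xi}$, of $\forceR$ and of that block be pairwise disjoint. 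Finally $\forceR_{\xi}\times\forceR$ and $\forceR\times\forceR_{\xi}$ differ only by a reordering of factors and use the same tree indices, so the forcing appended at $\xi$ and the new virtual forcing match those of the $\beta$-recursion, and the invariant is preserved. At the end of the recursion the $\alpha$-allowable construction driven by $F'$ has produced exactly $\forceP$, so $\forceP$ is $\alpha$-allowable; the case $\alpha=0$ runs the same way, with every case-(a) step redirected to plain coding and the virtual forcings simply discarded.

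The main obstacle is exactly this last step: one has to be sure that case (b) of the $\alpha$-allowable construction is permissive enough to imitate a case-(a) step of the $\beta$-allowable construction, which hinges entirely on the allowability clause built into (i)/(ii) guaranteeing both that the designated coding block is still unused and that the selected virtual forcing can still be multiplied in. A secondary, administrative point is that the whole recursion is carried out relative to the generic $G$ assembled alongside $\forceP$, so that $F'$ is definable from $F$, $G$ and the shared auxiliary data $(\forceR_{\xi},b_{\xi})_{\xi<\delta}$, and hence lies in the appropriate model.
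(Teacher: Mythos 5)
Your proposal is correct and follows essentially the same route as the paper: you define a new bookkeeping $F'$ that simulates the $\beta$-allowable reasoning and records its outcomes, so that the $\alpha$-allowable recursion driven by $F'$ (falling back to case (b) exactly when the minimal suitable $\zeta$ lies in $[\alpha,\beta)$) reproduces the same iteration and the same virtual forcings. Your write-up is in fact more detailed than the paper's proof, in particular in checking that the allowability clause in (a)(i)/(ii) guarantees the needed block-freeness and allowability of $\forceR_{\xi}\times\forceR$ for the case-(b) imitation.
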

\begin{proof}
Let $\alpha < \beta$, let $\forceP$ be a $\beta$-allowable forcing and let $F$ be the bookkeeping function which, together with the rules (a)+(b) from above determine $\forceP$. We will show that there is a bookkeeping function $F' \in W$ such that $\forceP$ can be seen as an $\alpha$-allowable forcing determined by $F'$. The first coordinate of $F'$ should always coincide with the first coordinate of $F$, i.e. $\forall \gamma ((F(\gamma)_0=F'(\gamma)_0)$. The second coordinate, which determines  which of the  $\vec{S}^i$-sequence is used for coding when in case (b) is defined via simulating the reasoning for a $\beta$-allowable forcing. This means that at every stage $\gamma$ of the iteration, we pretend that we are working with $\beta$-allowable forcings, we do the reasoning described in (a) and (b) for $\beta$-allowable using $F$. If case (a) does apply, and $\forceP(\gamma)$ is some $\forceP_{(x,m,k),1}$, then we simply let $(F'(\gamma))_1=1$.  That is, we let $F'$ simulate the reasoning we would apply if $\forceP$ would be a $\beta$-allowable forcing using $F$, and the forget about $\beta$-allowable and just keep the result of the reasoning. The new bookkeeping $F'$ is definable from $F$, and clearly $\forceP$ is $\alpha$-allowable using $F'$.

\end{proof}

\begin{lemma}
Let $\alpha$ be an arbitrary ordinal, let $F_1, F_2$ be two bookkeeping functions, $F_1: \delta_1 \rightarrow W^2, F_2: \delta_2 \rightarrow W^2$, and let $\forceP^1=(\forceP^1_{\beta} \, : \, \beta < \delta_1)$ and $\forceP^2=(\forceP^2_{\beta} \, : \, \beta < \delta_2)$ be the $\alpha$-allowable forcings one obtains when using $F_1$ and $F_2$ respectively.

Then $\forceP:=\forceP^1 \times \forceP^2$ is $\alpha$-allowable over $W$, as witnessed by some $F: (\delta_1+\delta_2) \rightarrow W^2$, which is definable from $\{F_1,F_2\}$.
\begin{proof}
By induction on $\alpha$. For $\alpha=0$, this follows immediately from the definition of 0-allowable.

Now suppose the Lemma is true for $\alpha$ and we want to show it is true for $\alpha+1$.
Given $F_1$ and $F_2$, we define $F(\gamma):= F_1(\gamma)$ for $\gamma < \delta_1$ and $F(\delta_1+\gamma):= F_2(\gamma)$ for $\gamma< \delta_2$. We claim that
$\forceP=\forceP^1 \times \forceP^2$ is $\alpha+1$-allowable with respect to $F$ over $W$.
This is shown via induction on the stages $\beta< \delta_1 + \delta_2$, i.e. we shall show that for each $\beta < \delta_1+\delta_2$,
 \[ 
 \forceP_{\beta}=  
\begin{cases}
\forceP^1_{\beta} \text{ if $\beta \le \delta_1$} \\
\forceP^1 \times \forceP^2_{\beta - \delta_1} \text{ if $\beta > \delta_1$}
\end{cases}
\]
is $\alpha+1$-allowable over $W$.

For $\beta \le\delta_1$, this follows immediately from the fact that
$\forceP_1$ is $\alpha+1$-allowable. 

For $\beta < \delta_2$, we assume by induction hypothesis that $\forceP_{\delta_1+\beta}$ is $\alpha+1$-allowable over $W$, and want to see that also $\forceP_{\delta_1+ \beta} \ast \forceP(\delta_1+ \beta+1)$ is $\alpha+1$-allowable.

We work over the model $W[\forceP^1] [\forceP^2_{\beta}]$.
Assume that $F(\delta_1 + \beta+1)_0 =F_2(\beta+1)_0=(\dot{x},m,k)$, let $x=\dot{x}^{G_{\beta}}$, and assume that, when defining $\forceP(\delta_1+ \beta )$ over $W [\forceP^1] [\forceP^2_{\beta}]$, using the rules for $\alpha+1$-allowable, we are in case (a) (i).  We shall show that in this situation, we are in case (a) (i) at stage $\beta$, when defining $\forceP^2(\beta)$ (again using the rules of $\alpha+1$-allowable) over $W[ \forceP^2_{\beta} ]$.

Indeed, as we are in case (a) (i) when defining $\forceP(\delta_1+ \beta )$ over $W [\forceP^1 ] [ \forceP^2_{\beta} ]$, there is a minimal $\zeta< \alpha+1$ such that for every $\forceQ \in W[\forceP^1 ] [\forceP^2_{\beta}]$, which is $\zeta$-allowable, it holds that $W [\forceP^1] [\forceP^2_{\beta}] \models \forceQ \Vdash x \in A_m$. If we assume for a contradiction, that we are not in case (a) (i), 
at stage $\beta$ when defining $\forceP^2(\beta)$ over $W[\forceP^2_{\beta}]$, then there is a $\forceR \in W[\forceP^2_{\beta}]$ such that
$\forceR$ is $\zeta$-allowable and $\forceR \Vdash x \notin A_m$.

But now, by induction hypothesis, $\forceR \in W[\forceP^1 ][\forceP^2_{\beta}]$ is $\zeta$-allowable there as well. Indeed, $\forceP^1$ is $\alpha+1$-allowable, hence $\zeta$-allowable over $W$, and so is the iteration $\forceP^2_{\beta} \ast \forceR$.  By induction hypothesis, $(\forceP^2_{\beta} \ast \forceR) \times \forceP^1  $ is $\zeta$-allowable, and so $\forceR$ is, in $W[\forceP^1] [\forceP^2_{\beta}]$, a $\zeta$-allowable forcing which forces $x$ to not belong to $A_m$, by upwards absoluteness of $\Sigma^1_3$-formulas. Hence we can not be in case (a) (i)  at stage $\delta_1 + \beta$, when defining $\forceP$ which is a contradiction.

The dual reasoning yields that if we are in case (a) (ii) at stage $\beta$  in the definition of $\forceP$ using $F$ over $W$, then we must be in case (a) (ii) as well  at stage $\beta$ in the definition of $\forceP^2(\beta)$ over $W[\forceP^2_{\beta}].$

Last, if we are in case (b) at stage $\beta$ in the definition of $\forceP$ using $F$ over $W[ \forceP_{\delta_1 +\beta} ]$, then we shall show that we are in case (b) as well at stage $\beta$ in the definition of $\forceP^2(\beta)$ over $W[\forceP^2_{\beta}]$. Under our assumption, for every $\zeta < \alpha+1$ there are $\zeta$-allowable forcings $\forceR^{\zeta}_1$ and $\forceR^{\zeta}_2 \in W[\forceP_{\delta_1 +\beta}]$ such that
$\forceR^{\zeta}_1 \Vdash x \notin A_m$ and $\forceR^{\zeta}_2 \Vdash x \notin A_k$.

But by induction hypothesis, $\forceP^1 \ast \forceR^{\zeta}_i$ is $\zeta$-allowable over $W[\forceP^2_{\beta}]$, hence these forcings show that we are in case (b) at stage $\beta$ in the definition of $\forceP^2(\beta)$ over $W[\forceP^2_{\beta}]$.

To summarize, the above shows that if we define the $\alpha+1$-allowable forcing $\forceP$ with $F$ as our bookkeeping function, the outcome will be $\forceP^1 \times \forceP^2$, so the latter is indeed $\alpha+1$-allowable. 

Finally if $\alpha$ is a limit ordinal, then $\forceP^1 \times \forceP^2$ will be $\xi$-allowable for every $\xi < \alpha$, but this implies that $\forceP^1 \times \forceP^2$ is $\alpha$-allowable.
\end{proof}
\end{lemma}

\begin{lemma}
For any $\alpha$, the set of $\alpha$-allowable forcings is non-empty.
\end{lemma}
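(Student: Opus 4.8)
The plan is to prove this by induction on $\alpha$, following the recursion that defines the notion ``$\alpha$-allowable'', and to show that the recursive construction of an $\alpha$-allowable forcing \emph{relative to a given bookkeeping function $F$} never breaks down. In fact the natural statement to prove is slightly stronger: for every ordinal $\alpha$ and every bookkeeping function $F:\delta\to H(\omega_2)^3$ with $\delta\le\omega_1$ there is at least one $\alpha$-allowable forcing determined by $F$. Nonemptiness of the set of $\alpha$-allowable forcings is then immediate, e.g.\ by taking $F$ trivial.

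For $\alpha=0$, ``$0$-allowable'' is just ``allowable'', and the empty iteration (with $F$ the empty function) is allowable, so the set is nonempty. Assume the notion of $\delta$-allowable is defined and the claim holds for all $\delta<\alpha$, fix $F:\delta\to H(\omega_2)^3$, and run the construction of an $\alpha$-allowable forcing relative to $F$. At a successor stage $\beta+1$ we have already built $\forceP_\beta$ and the virtual forcing $\forceR_\beta$ with its index set $b_\beta$, and consulting $F(\beta)$ we land in case (a) or case (b). In case (a) the triggering condition itself asserts the existence of a minimal $\zeta<\alpha$ and, for that $\zeta$, of an allowable forcing $\forceR$ and an ordinal $\gamma<\omega_1$ with the required disjointness and forcing properties; hence the set of admissible witnesses $(\forceR,\gamma)$ is by definition nonempty, and the construction simply picks one (the $F(\beta)_1$-th, or the $<$-least if $F(\beta)_1$ fails to name a suitable index) and sets $\forceP(\beta):=\forceP_{(x,m,k),i,\gamma}$ and $\forceR(\beta):=\forceR$, with $\forceR_{\beta+1}:=\forceR\times\forceR_\beta$ again allowable by the disjointness built into the witness condition. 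In case (b) we must code $x$ into a fresh $\omega$-block of $\vec S^{F(\beta)_2}$ whose tree-indices avoid $b_\beta$ (redefining $F(\beta)_2:=1$ and choosing $F(\beta)_1$ least admissible if necessary); since $\beta<\omega_1$, only countably many $\omega$-blocks have been consumed by $\forceP_\beta$ and $\forceR_\beta$ so far, while each of $\vec S^1$ and $\vec S^2$ supplies $\omega_1$-many pairwise disjoint $\omega$-blocks, so a fresh block always exists and the step goes through. Limit stages $\beta$ merely take finite-support direct limits of the forcings already built, which always exist. For limit $\alpha$ the argument is verbatim the same, the only change being that case (a) now quantifies over $\zeta<\alpha$, which is exactly the range covered by the inductive hypothesis. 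Thus the construction survives all $\delta$ stages and produces an $\alpha$-allowable forcing relative to $F$.

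I expect no substantial obstacle here; the only point needing care is case (b), namely the elementary observation that ``free block'' availability is never exhausted, i.e.\ that fewer than $\aleph_1$ many $\omega$-blocks can have been used before any stage $\beta<\omega_1$ while $\aleph_1$ many are available in each of $\vec S^1,\vec S^2$. Case (a) is tautologically unobstructed, since its triggering hypothesis literally asserts that a witnessing pair $(\forceR,\gamma)$ exists, and the products that get formed ($\forceR\times\forceR_\beta$, and $\forceP_\beta$ followed by $\forceP_{(x,m,k),i,\gamma}$) remain allowable precisely because the disjointness of the block indices is part of what ``witnesses (a)'' means. The real content of the lemma is therefore just that the derivation operator never collapses a level to $\emptyset$, so that the later fixed-point argument has something nonempty to work with at every stage.
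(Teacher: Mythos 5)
Your proof is correct and takes essentially the same route as the paper's, which argues in one line by induction on $\alpha$ that the rules (a) and (b) together with any bookkeeping function always produce an $\alpha$-allowable forcing. You merely make explicit the two points the paper leaves implicit: case (a) is self-witnessing whenever it triggers, and case (b) never stalls because before any stage $\beta<\omega_1$ only countably many $\omega$-blocks of $\vec{S}^1,\vec{S}^2$ have been consumed while $\aleph_1$-many are available.
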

\begin{proof}
By induction on $\alpha$. If there are $\alpha$-allowable forcings, then the rules (a) and (b) above, together with some bookkeeping $F$ will create an $\alpha+1$-allowable forcing. For limit ordinals $\alpha$, an $\alpha$-allowable forcing always exists, as for any given bookkeeping function $F$ there will be an $\alpha$-allowable, non-trivial forcing with $F$ as its bookkeeping function. 
\end{proof}

As a direct consequence of the last two observations we obtain that there must be an ordinal $\alpha$ such that for every $\beta> \alpha$, the set of $\alpha$-allowable forcings must equal the set of $\beta$-allowable forcings. Indeed every allowable forcing is an $\aleph_1$-sized partial order, thus there are only set-many of them, and the classes (which in fact are sets, if we allow ourselves to identify isomorphic forcings) of $\alpha$-allowable forcings must eventually stabilize at a set which also must be non-empty.

\begin{definition}
Let $\alpha$ be the least ordinal such that for every $\beta> \alpha$, the set of $\alpha$-allowable forcings is equal to the set of $\beta$-allowable forcings. We say that some forcing $\forceP$ is $\infty$-allowable if and only if it is $\alpha$-allowable. Equivalently, a forcing is $\infty$-allowable if it is $\beta$-allowable for every ordinal $\beta$.
\end{definition}
The set of $\infty$-allowable forcings can also be described in the following way. An $\delta < \omega_1$-length iteration $\forceP= (\forceP_{\alpha} \, : \, \alpha< \delta)$ is $\infty$-allowable if it is recursively constructed according to a bookkeeping function $F$ as follows: For every $\beta< \delta$ of the iteration:
\begin{enumerate}
\item[(a)] If the first coordinate of $F(\beta) ,(F(\beta))_0=(\dot{x},m,k)$, where $\dot{x}$ is the $\forceP_{\beta}$-name of a real. Further we assume that $\dot{x}^{G_{\beta}}=x$, for $G_{\beta}$ a $\forceP_{\beta}$-generic filter over $W$ and $W[G_{\beta}] \models x \in A_m \cup A_k$. 
We assume
 that in $W[G_{\beta}]$, the following is true:
\begin{enumerate}
\item[] There is an ordinal $\zeta$, which is chosen to be minimal for which
\item[(i)] 
in the universe $W[G_{\beta}]$, the following holds:
\begin{align*}
 \forall \forceQ &(\forceQ \text{ is } \zeta\text{-allowable}
 \rightarrow \forceQ \Vdash x \in A_m)
\end{align*}
\item[(ii)]
 Or if (a) (i) is not true, but it holds in $W[G_{\beta}]$ that
\begin{align*}
 \forall \forceQ &(\forceQ \text{ is } \zeta\text{-allowable} \rightarrow
 \forceQ \Vdash x \in A_k)
\end{align*}
\end{enumerate}
We give case (a) (i) preference over (a) (ii) if both are true for the minimal $\zeta$.
If this is the case, then we define the $\beta$-th factor of our iteration as $\forceP(\beta):= \forceP_{(x,m,k),2}$ if (a) (i) is true. We let
$\forceP(\beta):= \forceP_{(x,m,k),1}$ if case (a) (ii) is true.

\item[(b)] Otherwise, we let $F(\beta)_1\in \{1.2\}$ decide which $\vec{S}^i$-sequence to use and define
$\forceP(\beta):= \forceP_{(x,m,k),F(\beta)_1}$.
\end{enumerate}

The next Lemma follows immediately from the definitions of $\infty$-allowable and tells us, when an iteration results in an $\infty$-allowable notion of forcing.
 \begin{lemma}\label{allowability}
Let $\delta < \omega_1$ and let  $(\forceP_{\beta} \, : \, \beta < \delta )$ be an $\infty$-allowable forcing over $W$.
Let $\delta' < \omega_1$,  $(\forceQ_{\beta} \mid \beta < \delta') \in W[G_{\delta}]$ be such that
$W[G_{\delta}]  \models  (\forceQ_{\beta} \mid \beta < \delta')$ is $\infty$-allowable.
Then $(\forceP_{\beta} \, : \, \beta < \delta ) \ast (\dot{\forceQ}_{\beta} \mid \beta < \delta')  $ is $\infty$-allowable, over $W$. 
\end{lemma}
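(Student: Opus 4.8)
The plan is to produce a single bookkeeping function $F$ over $W$, defined on the ordinal sum $\delta + \delta'$, which drives the $\infty$-allowable construction in such a way that the resulting iteration is exactly $(\forceP_{\beta} \, : \, \beta < \delta) \ast (\dot{\forceQ}_{\beta} \mid \beta < \delta')$. Since being $\infty$-allowable just means being $\alpha_0$-allowable for the stabilization ordinal $\alpha_0$, and since the rules (a$'$)$+$(b) together with a bookkeeping function exhaust all $\infty$-allowable forcings, it suffices to exhibit such an $F$ and to check that the construction it induces agrees stage by stage with the concatenation.

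First I would fix a bookkeeping function $F^0 \colon \delta \to H(\omega_2)^3$ witnessing that $(\forceP_{\beta} \, : \, \beta < \delta)$ is $\infty$-allowable over $W$; running the construction with $F^0$ produces along the way the virtual forcings $(\forceR_{\beta} \, : \, \beta \le \delta)$ and the index sets $b_{\beta}$, with $\forceR_{\delta}$ their direct limit. Using the hypothesis $W[G_{\delta}] \models \forceR_{\delta} \Vdash ``(\forceQ_{\beta} \mid \beta < \delta')$ is $\infty$-allowable$"$, I would fix in $W[G_{\delta}]$ (and then a name for it in $W$ with respect to $(\forceP_{\beta} \, : \, \beta < \delta)$) a bookkeeping function $F^1 \colon \delta' \to H(\omega_2)^3$ such that the $\infty$-allowable construction carried out over $W[G_{\delta}]$, but with the virtual forcing seeded at stage $0$ by $\forceR_{\delta}$ rather than by the trivial forcing, reproduces exactly $(\forceQ_{\beta} \mid \beta < \delta')$. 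Finally I set $F \upharpoonright \delta := F^0$ and, for $\beta < \delta'$, let $F(\delta + \beta)$ be the name coding $F^1(\beta)$, broken into the same coordinates as $F^1$ uses.

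Next I would verify by induction along $\delta + \delta'$ that the $\infty$-allowable construction over $W$ driven by $F$ yields the concatenation. For $\beta < \delta$ this is immediate from $F \upharpoonright \delta = F^0$, and at the limit stage $\delta$ the direct limits on both sides coincide, so the iteration constructed so far is $\forceP := (\forceP_{\beta} \, : \, \beta < \delta)$ and the current virtual forcing is $\forceR_{\delta}$. At a stage $\delta + \beta$ one checks two things: (1) the virtual forcing there is $\forceR_{\delta} \times \tilde{\forceR}_{\beta}$, where $\tilde{\forceR}_{\beta}$ is the virtual forcing produced at stage $\beta$ of the $W[G_{\delta}]$-construction of $(\forceQ_{\gamma})$, which is a routine induction on $\beta$ using that $F(\delta + \beta)$ copies $F^1(\beta)$; and (2) the rule (a)/(b) dichotomy, and within (a) the minimal $\zeta$ and the choice of case (i) versus (ii), come out the same way in the $W$-construction at stage $\delta + \beta$ as in the $W[G_{\delta}]$-construction at stage $\beta$. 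For (2) one uses that the two generic extensions $W[G_{\delta+\beta}]$ and $W[G_{\delta}][G^{\forceQ}_{\beta}]$ are literally the same model, that the notions ``allowable$"$ and ``$\zeta$-allowable$"$, and hence the stabilization ordinal $\alpha_0$, are uniformly definable over $W$ and so are evaluated identically in both runs, and that the coding forcings $\forceP_{(x,m,k),i,\gamma}$ and the auxiliary allowable forcings $\forceR$ occurring in (a$'$)(i)/(ii) are insensitive to the surrounding universe. Hence at every stage the iterand and the virtual forcing produced by $F$ agree with $\forceQ_{\beta}$ and with the virtual forcing of the $\forceQ$-construction, so the $W$-construction with $F$ produces exactly $(\forceP_{\beta} \, : \, \beta < \delta) \ast (\dot{\forceQ}_{\beta} \mid \beta < \delta')$, which is therefore $\infty$-allowable over $W$.

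I expect the main obstacle to be step (2): making precise and verifying that running the tail of the construction over $W$ does not alter any of the case distinctions that were made when $(\forceQ_{\beta})$ was certified $\infty$-allowable over $W[G_{\delta}]$. This is exactly the point for which the ground model $W$ and the ``universe-insensitive$"$ coding via the ground-model Suslin trees $\vec{S}$ were introduced: because allowability and $\zeta$-allowability are definable over $W$ and the coding is absolute between $W[G_{\delta}]$ and its further generic extensions, the derivation operator stabilizes at the same $\alpha_0$ everywhere, and the quantifiers over auxiliary allowable forcings $\forceR$, names $\forceQ$, and ordinals $\gamma, \zeta$ in clauses (a$'$)(i)/(ii) range over the same objects and are decided the same way in both runs. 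Everything else is bookkeeping.
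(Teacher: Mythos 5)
Your proposal is correct and coincides with what the paper intends: the paper offers no explicit argument for this lemma (it is declared to follow immediately from the definitions), and your concatenation of bookkeeping functions with a stage-by-stage induction is exactly the natural unpacking of that claim. The point you isolate as the main obstacle -- that the case distinctions in (a)/(b) and the minimal $\zeta$ are decided the same way at stage $\delta+\beta$ over $W$ as at stage $\beta$ of the run certified in the virtual $\forceR_{\delta}$-extension -- is precisely what the paper's uniform definability of $\alpha$-allowability over $W$ and the universe-insensitivity of the coding forcings $\forceP_{(x,m,k),i,\gamma}$ are designed to guarantee, so no further ideas are needed.
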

As a consequence, blocks of $\infty$-allowable iterations $(\forceP^i_{\beta} \mid \beta < \delta^i)$, $i < \eta <\omega_1$ can be concatenated to one $\infty$-allowable forcing over $W$.
This will be used to see that the upcoming iteration is indeed an $\infty$-allowable iteration over $W$.
\subsection{Definition of the universe in which the ${\Pi^1_3}$ reduction property holds}
The notion of $\infty$-allowable will be used now to define the universe in which the ${\Pi^1_3}$-reduction property is true. We let $W$ be our ground model and start an $\omega_1$-length iteration such that each initial segment of that iteration, which of course must have countable length, is an $\infty$-allowable forcings.
The iteration is guided by a bookkeeping $F: \omega_1 \rightarrow H(\omega_1)^2$, which, on its first coordinate should have the property that for every $(a,b) \in H(\omega_1)^2$, the pre-image $F^{-1} (a,b)$ is unbounded in $\omega_1$.

The definition of the iteration $\forceP= (\forceP_{\beta} \, : \, \beta \le \omega_1)$ is as always specified by induction. At the last stage $\omega_1$ we take the direct limit of the previously construced $\forceP_{\beta}$. Now to the definition of the $\forceP_{\beta}$'s:

\begin{enumerate}
\item  We assume that we are at stage $\beta < \omega_1$, the $\infty$-allowable forcing $\forceP_{\beta}$ has been defined.
Assume that $F(\beta)=(\beta_0,\beta_1) \in H(\omega_1)^2$ and $\beta_0, \beta_1 < \beta$.
We further assume that the $\beta_1$-th (in some previously fixed well-order $<$ of $H(\omega_2))$ $\forceP_{\beta_0}$-name of a triple of the form $(\dot{a},\dot{n}, \dot{l})$, where $\dot{a}$ is a nice $\forceP_{\beta_0}$-name of a real, and $\dot{n}, \dot{l}$ are nice $\forceP_{\beta_0}$-names of natural numbers, is $(\dot{x},\dot{m},\dot{k})$. We assume $\dot{x}^{G_{\beta}}=x$, $\dot{m}^{G_{\beta}}=m$, $\dot{k}^{G_{\beta}}= k$, $k<m$ and that in $W[G_{\beta}]$, $x \in A_m \cup A_k$. 
Now,
if in the universe $W[G_{\beta}]$,
there is a minimal $\zeta < \alpha$ such that
\begin{enumerate}
\item[(i)]$ W[G_{\beta}] \models \forall \forceQ \in W[G_{\beta}] (\forceQ \text{ is $\zeta$-allowable } \rightarrow \forceQ \Vdash x \in A_m)
$, then force with $\forceP(\beta):= \forceP_{(x,m,k),2}$.

Note that this has as a direct consequence, that if we restrict ourselves from now on to forcings $\forceQ \in W[G_{\beta+1}]$ such that $ \forceQ$ is $\zeta$-allowable, then $x$ will remain an element of $A_m$. In particular, the pathological situation that $x \notin A_m$, $x \in A_k$ while $x$ is coded into $\vec{S^2}$ is ruled out for $(x,m,k)$.

\item[(ii)] If we can kick $x$ out of $A_m$ with a $\zeta$-allowable forcing over $W[G_{\beta}]$, yet it is true that
\begin{align*}
  W[G_{\beta}] \models \forall \forceQ \in &W[G_{\beta}]( \forceQ \text{ is $\zeta$-allowable } \rightarrow x \in A_k)
\end{align*}
then force with $\forceP(\beta):=\forceP_{(x,m,k),1}$.

\end{enumerate}

\item If $F(\beta)=(x,m,k)$ and $W[G_{\beta}]\models x \in A_m \cap A_k$ and neither case 1 (i) nor 1 (ii) applies, then we obtain that
\begin{align*}
W[G_{\beta}] \models \exists \forceQ (\forceQ \text{ is } \infty\text{-allowable) and }  \forceQ \Vdash x \notin A_m).
\end{align*}
With the same argument we also obtain that
\begin{align*}
W[G_{\beta}] \models \exists \forceR (\forceR \text{ is } \infty\text{-allowable) and }  \forceR \Vdash x \notin A_k).
\end{align*}
In this situation, we let $\forceQ$ and $\forceR$ the $<$-least $\infty$-allowable forcings as above and use
\[ \forceP(\beta):= \forceQ \times \forceR \]
which is an $\infty$-allowable forcing over $W[G_{\beta}]$ and which forces that $x \notin A_m \cup A_k$.

\end{enumerate}
This ends the definition of the iteration and we shall show that, if $G_{\omega_1}$ denotes a generic filter for the forcing $\forceP_{\omega_1}$, which is defined as the direct limit of the forcings $\forceP_{\beta}$, then the resulting universe $W[G_{\omega_1}]$ satisfies the $\Pi^1_3$-reduction property.
For every pair $(m,k) \in \omega^2$, we define 
\[D^1_{m,k}:= \{ x \in 2^{\omega} \, : \, (x,m,k)\text{ is  not coded into the } \vec{S^1}\text{-sequence}\}\]
and 
\[D^2_{m,k}:= \{ x \in 2^{\omega} \, : \,(x,m,k) \text{ is not coded into the } \vec{S^2}\text{-sequence}\}.\]
Our goal is to show that for every pair $(m,k)$ the sets $D^1_{m,k} \cap A_m$ and $D^2_{m,k} \cap A_k$ reduce the pair of $\Pi^1_3$-sets $A_m$ and $A_k$. 

\begin{lemma}
In $W[G_{\omega_1}]$, for every pair $(m,k)$, $m,k \in \omega$ and corresponding $\Pi^1_3$-sets $A_m$ and $A_k$:
\begin{enumerate}

\item[(a)] $D^1_{m,k} \cap A_m$ and $D^2_{m,k} \cap A_k$ are disjoint.
\item[(b)] $(D^1_{m,k} \cap A_m) \cup (D^2_{m,k} \cap A_k)= A_m \cup A_k$.
\item[(c)] $D^1_{m,k} \cap A_m$ and $D^2_{m,k} \cap A_k$ are $\Pi^1_3$-definable.
\end{enumerate}
\end{lemma}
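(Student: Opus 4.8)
\emph{Proof proposal.} The plan is to reduce all three clauses to one statement $(\dagger)$, namely: in $W[G_{\omega_1}]$, for every pair $(m,k)$ and every real $x\in A_m\cup A_k$, the triple $(x,m,k)$ is coded into exactly one of $\vec{S^1},\vec{S^2}$; and moreover, if $(x,m,k)$ is coded into $\vec{S^2}$ then $x\in A_m$, while if it is coded into $\vec{S^1}$ then $x\in A_k$. Granting $(\dagger)$, all three items are Boolean bookkeeping: for (a), a point of $(D^1_{m,k}\cap A_m)\cap(D^2_{m,k}\cap A_k)$ would be coded into both sequences, contradicting $(\dagger)$; for (b) the inclusion $\subseteq$ is trivial, while for $\supseteq$ one takes $x\in A_m\cup A_k$, uses $(\dagger)$ to place $(x,m,k)$ in, say, $\vec{S^2}$, concludes $x\in A_m$, hence $x\in D^1_{m,k}\cap A_m$; for (c), $(\dagger)$ gives the pointclass identities $D^1_{m,k}\cap A_m=A_m\cap\{x:(x,m,k)\text{ is not coded into }\vec{S^1}\}$ and $D^2_{m,k}\cap A_k=A_k\cap\{x:(x,m,k)\text{ is not coded into }\vec{S^2}\}$, and since ``$(x,m,k)$ is coded into $\vec{S^i}$'' is $\Sigma^1_3$ — this is exactly what the local coding property $({\ast}{\ast}{\ast})$ buys us, once one also notes that none of the forcings in play destroys the remaining trees of $\vec S$ that serve as book-keeping, by the Suslin-preservation results of Section~2, so that the pattern faithfully records what was forced — each of these two sets is a finite intersection of $\Pi^1_3$-sets, hence $\Pi^1_3$.

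So the task is $(\dagger)$. First I would fix the bookkeeping $F$: since the whole construction is a finite support iteration of length $\omega_1$ of $\infty$-allowable forcings, each of which is ccc (the coding forcings $\forceP_{(x,m,k),i,\gamma}=\forceQ^0\ast\forceQ^1$ are finite support iterations of Suslin-tree and almost disjoint coding forcings), the iteration is ccc, every real of $W[G_{\omega_1}]$ appears at a countable stage, and a ``catch your tail'' argument lets one arrange $F$ so that every triple $(x,m,k)$ with $x$ a real of $W[G_{\omega_1}]$ is handled at some stage $\beta<\omega_1$ and so that case~3 is invoked cofinally often. Because $A_m,A_k$ are $\Pi^1_3$, the statements ``$x\notin A_m$'' and ``$x\notin A_k$'' are preserved under further forcing, so if $x\in A_m\cup A_k$ holds in $W[G_{\omega_1}]$ it already holds at the handling stage $\beta$. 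At $\beta$ case~1 must then apply — case~2, and any case~3 invocation naming $(x,m,k)$, eject $x$ permanently from $A_m\cup A_k$ — so $(x,m,k)$ is coded into $\vec{S^2}$ by case~1(i) or into $\vec{S^1}$ by case~1(ii), and into nothing else \emph{at that stage}. This gives the ``at least one'' half of $(\dagger)$.

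The remaining clauses both rest on a transfer principle, which I expect to be the main work. When case~1(i) fires at $\beta$ it installs a $\zeta$-allowable guarantee of the shape ``$\Vdash_{\forceP_{(x,m,k),2,\gamma}\ast\forceQ}x\in A_m$ whenever $\forceR_{\beta+1}\Vdash\forceQ$ is $\zeta$-allowable''; now, by Lemma \ref{allowability}, the genuine tail $\forceP_{[\beta+1,\omega_1)}$ of the construction is, computed in $W[G_{\beta+1}]$ against the virtual forcing $\forceR_{\beta+1}$, itself $\infty$-allowable and hence $\zeta$-allowable for the relevant $\zeta<\alpha_0$, so the guarantee applies to the actual tail and yields $x\in A_m$ in $W[G_{\omega_1}]$ (dually, $x\in A_k$ for case~1(ii)); this is the correctness clause. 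For ``at most one'', suppose $(x,m,k)$ were also coded into the other sequence somewhere. As $x\in A_m\cup A_k$ in the end, this cannot have happened through a further case~1, case~2 or case~3 step (the bookkeeping handles $(x,m,k)$ once; case~2 and case~3 eject $x$), so it would have occurred inside a nested $\infty$-allowable forcing via its rule~(a)(ii) or rule~(b) — and these fire only when no robust placement for the other sequence is available at that moment; but the robust placement witnessed at stage $\beta$ persists into every later $\infty$-allowable extension (composing a later $\zeta$-allowable continuation with the intervening $\infty$-allowable forcing is again a $\zeta$-allowable continuation from $W[G_{\beta+1}]$, by the same concatenation argument), so the priority of rule~(a)(i) over (a)(ii) forces the ``safe'' choice and no competing coding occurs; the few residual configurations that slip past this interlock are exactly the pathologies mopped up by the cofinally-invoked case~3. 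Together these yield $(\dagger)$, and with it the lemma.

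\emph{Main obstacle.} The delicate point throughout is this transfer: the $\zeta$-allowable guarantees installed at stage $\beta$, and the ``no robust placement'' clauses the rules react to, speak about \emph{all} continuations \emph{as seen from the current model}, whereas $(\dagger)$ concerns the single concrete run of the construction in a far later model. Making the two communicate is precisely the role of the fixed-point characterization of $\infty$-allowable together with the concatenation Lemma \ref{allowability}, and it must be dovetailed with a choice of $F$ that simultaneously enumerates all triples, keeps the virtual forcings $\forceR_\beta$ coherent along the iteration, and schedules case~3 densely enough that every pathology a nested rule~(b) could introduce is repaired before stage $\omega_1$.
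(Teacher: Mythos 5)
Your proposal is correct and follows essentially the same route as the paper's own proof: you reduce (a)--(c) to the statement that every $x\in A_m\cup A_k$ of the final model has $(x,m,k)$ coded into exactly one of $\vec{S^1},\vec{S^2}$ with the placement matching membership in $A_m$ resp.\ $A_k$, and you establish this by the same mechanism the paper uses (handling stage via bookkeeping, upward absoluteness of $\Sigma^1_3$, case~1 firing, persistence of the installed guarantee along the actual tail via Lemma~\ref{allowability}, and the rules excluding a competing coding), together with the same $\Pi^1_3$-definition of the reducing sets via the negated $\Sigma^1_3$ coding statement. Your write-up merely makes explicit the transfer argument that the paper's proof leaves implicit.
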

\begin{proof}
We prove (a) first. If $x$ is an arbitrary real in $A_m \cap A_k$ there will be a least stage $\beta$, such that $F$ at stage $\beta$ considers a triple of names which itself corresponds to the triple  $(x,m,k)$. As $x \in A_m \cap A_k$, we know that case 1 (i)  or 1 (ii) must have applied. We argue for case 1 (i) as case (ii) is similar. In case 1 (i), $\forceP_{(x,m,k),2}$ does code $(x,m,k)$ into $\vec{S^2}$, while ensuring that for all future $\infty$-allowable extensions, $x $ will remain an element of $A_m$.  The rules of the iteration also tell us that $(x,m,k)$ will never be coded into $\vec{S^1}$ by a later factor of the iteration. Thus $x \in D^1_{m,k} \cap A_m$. As we coded $(x,m,k)$ into $\vec{S}^2$, it follows that $x \notin D^2_{m,k}$ and $D^1_{m,k} \cap A_m$ and $D^2_{m,k} \cap A_k$ are disjoint.

To prove (b), let $x$ be an arbitrary element of $A_m \cup A_k$. Let $\beta$ be the stage of the iteration where the triple $(x,m,k)$ is considered first. As $x \in A_m \cup A_k$, either case 1 (i) or (ii) were applied at stage $\gamma$.
Assume first that it was case 1 (i). Then, as argued above, $x \in \cap A_m$ will remain true for the rest of the iteration,  and we will never code $(x,m,k)$ into $\vec{S}^1$ at a later stage of our iteration. Hence $x \in A_m \cap D^1_{m,k}$. If at stage $\beta$ case (ii) applied, then $x \in D^2_{m,k} \cap A_k$, and again, we will never code $(x,m,k)$ into $\vec{S}^1$ at a later stage of our iteration. Thus,either $x \in D^1_{m,k} \cap A_m$ or $x \in D^2_{m,k} \cap A_k$ and we are finished.

To prove (c), we claim that $D^1_{m,k}$ has uniformly the following $\Pi^1_3$-definition over $W[G_{\omega_1}],$ where the $\Sigma^1_3$-formula $\psi((x,m,k),i)$ is defined right above Lemma \ref{nounwantedcodes} (note that in the formulation there the real $w$ is a recursive code for the triple $(x,m,k)$):
\begin{align*}
x \in D^1_{m,k} \cap A_m \Leftrightarrow x \in A_m  \land& \lnot(\exists r (\psi( (x,m,k),1))
\end{align*} 
It is straightforward to see that the right hand side of the equivalence above is the conjunction of two $\Pi^1_3$-formulas, so $\Pi^1_3$ as desired. 
\end{proof}

\subsection{A $\Pi^1_3$-set which can not be uniformized by a $\Pi^1_3$-function}

The next observations will finish the proof of our main result, namely that there is a variant of the thinning out process before which will produce a universe, again denoted with $W[G_{\omega_1}]$, over which there is a $\Pi^1_3$-set which can not be uniformized by a $\Pi^1_3$ function.  

 We assume, without loss of generality, that in our list of $\Pi^1_3$-formulas, the first formula $\varphi_0$ has the following form: \[\varphi_0(x,y) \equiv x=x \land y=y.\] It is clear that there is no allowable, indeed no forcing at all which kicks pair $(x,y)$ out of $A_0$. 
As a consequence, the $\Pi^1_3$-reduction property is automatically satisfied for any pair of $\Pi^1_3$-sets $(A_m,A_0)$. This gives us some freedom to use coding forcings of the form $\operatorname{Code} (x,y,0,m,i)$ in a different way than in our definition of $\alpha$-allowable before. We shall use these forcing now to produce a universe $W[G_{\omega_1}]$ where the $\Pi^1_3$-reduction property holds and additionally the following property holds true.
\begin{itemize}
    \item[$(\star)$] Every \emph{total} $\Pi^1_3$-graph in the plane is also $\Sigma^1_3$.

\end{itemize}
The above property $(\star)$ implies that $\Pi^1_3$-uniformization must fail. Indeed it is straightforward
to see that there always exist $\Pi^1_3$-sets in the plane, with full projection on the first coordinate and which can not be uniformized by a $\Sigma^1_3$-function. Thus, $\Pi^1_3$-uniformization in a universe with the property $(\star)$ is impossible.

To define the desired universe we need to slightly change our thinning out process. Again $F$ will be our bookkeeping function. 
We define the new class of $\alpha$-allowable forcings, denoted by $\Gamma_\alpha$, by induction on $\alpha$:

\begin{definition}[Successor Steps, $\alpha = \delta + 1$]
Assume $\Gamma_\delta$ is defined. At stage $\beta$ of an iteration, if the bookkeeping function $F(\beta)$ yields a tuple of names
$(\dot{x}, \dot{m},\dot{k})$ which are not necessarily full $\forceP_{\beta}$-names but can also be $\forceP_{A}-$-names for $A \subset \beta$ and $\forceP_{A} \Vdash \dot{k} \ne 0$ then we do exactly the same things as before in our old definition of allowable. If on the other hand 
$(\dot{x}, \dot{z}, \dot{m}, 0)$, we evaluate the section $A_m(x) = \{ y \mid \varphi_m(x,y) \}$ with the help of our generic $G_{\beta}$ and consider the following cases.
\begin{itemize}
    \item \textbf{Preservation:} If there exists at least one real $y \in W[G_{\beta}]$ such that $\varphi_m(x,y)$ remains true in \emph{all} further $\Gamma_\delta$-generic extensions, we fix the $<_W$-least such a $y$ (we minimize on the names and then pick the associated $y$). Then we force with $\operatorname{Code}(x,y,m,0,1)$.
    \item \textbf{Guessing:} If no such $y$ exists (i.e., for every $y$, there is a $\Gamma_\delta$-forcing that kills $\varphi_m(x,y)$), the forcing proceeds by adding the bookkeeping real $z$ and forcing with $\operatorname{Code}(x,z,m, \eta)$.
\end{itemize}
A forcing $\mathbb{P}$ is $\alpha$-allowable if it conforms to these constraints relative to $\Gamma_\delta$.
\end{definition}

\begin{definition}[Limit Steps]
For a limit ordinal $\lambda$, a forcing is $\lambda$-allowable if it is $\alpha$-allowable for all $\alpha < \lambda$.
\end{definition}

Now this new notion of $\alpha$-allowable behaves in the very same way than the old notion in that the sets $\Gamma_{\alpha}$ are decreasing if $\alpha$ increases. Also the $\Gamma_{\alpha}$'s are closed under products and will stabilize at some point yielding the non-empty set of $\infty$-allowable forcings $\Gamma_{\infty}$.

This is the right set of forcings for our task. We start, working over $W$ an $\infty$-allowable iteration. The bookkeeping $F$ is defined as in the proof of $\Pi^1_3$-reduction before. In the stages $\beta$ where the bookkeeping $F$ lists some tuple $(\dot{x},\dot{m},\dot{k}$ and $\Vdash \dot{k} \ne 0$, we proceed exactly as in the proof of forcing the $\Pi^1_3$-reduction property.

In the stages $\beta$ where $F(\beta) = (\dot{x},\dot{y},\dot{m},0)$ we do the following:
\begin{itemize}
    \item Assume that $F(\beta)$ evaluates to $(x,y,m,0)$ and $A_m$ at section $x$ has one real $z$ such that $A_m (x,z)$ remains true in all further $\infty$-allowable generic extensions. 
    If this is the case we pick the $<$-least such $z$ and force with
    \[ \operatorname{Code} (x,z,m,0,1).\]
    \item Assume that $F(\beta)$ evaluates to $(x,y,m,0)$ and $A_m$ at section $x$ has no element which will stay in $A_m$ in all future $\infty$-allowable extensions. In this situation we let $\forceP(y)$ be such a forcing which 
    \[ \forceP(y) \Vdash \lnot \varphi_m (x,y)\]
    and force with $\forceP(y)$ at stage $\beta$.
\end{itemize}

The iteration is of length $\omega_1$ of course and we let $W[G_{\omega_1}]$ denote the final model. The proof of the next lemma is the same as before.
\begin{lemma}
    In $W[G_{\omega_1}]$ the $\Pi^1_3$-reduction property holds.
\end{lemma}

The next lemma finishes the proof:
\begin{lemma}
    In $W[G_{\omega_1}]$ if the $\Pi^1_3$-formula $\varphi_m$ defines the set $A_m$ in the plane and $A_m$ it the total graph of a function, then $A_m$ is also $\Sigma^1_3$ via the formula
    \[\{ (x,y,m,0) \mid (x,y,m,0) \text{ is coded into } \vec{S}^1\}\]
\end{lemma}
\begin{proof}
 Suppose that $A_m$ is as in the lemma and $x$ is an arbitrary real. There will be a stage $\beta$  where $(x,z,m,0)$ is listed. As there is exactly one $y$ such that $A_m (x,y)$ holds, we can assume that at stage $\beta$ we will be in case 1 and use $\operatorname{Code (x,y,m,0)}$. In our iteration we will never use a forcing of the form $\operatorname{Code} (x,z,m,0)$ for $z \ne y$ as this would imply that $(x,z) \in A_m$ and $A_m$ is not the graph of a function. So
 $\{ (x,y,m,0) \mid (x,y,m,0) \text{ is coded into } \vec{S}^1\}$ uniformizes $A_m$ as desired.
\end{proof}

One final remark concerning the proof: If $A_m$ is the graph of  a partial $\Pi^1_3$-function then  \[\{ (x,y,m,0) \mid (x,y,m,0) \text{ is coded into } \vec{S}^1\}\] will not uniformize it as at reals $x$ where the $x$-section of $A_m$ is empty, there will be reals $y$ such that $(x,y,m,0,1)$ are coded.

\end{document}